\newtheorem{theorem}{Theorem}
\newtheorem{lemma}{Lemma}
\newtheorem{example}{Example}
\newtheorem{observation}{Observation}
\newtheorem{conjecture}{Conjecture}
\newcommand{\N}{\ensuremath{\mathbb N}}
\newcommand{\SG}{\ensuremath{\mathcal{SG}}}
\newcommand{\opt}{\mbox{opt}}
\newcommand{\bs}{\ensuremath \mathrm}
\begin{document}
\title{Game values of arithmetic functions}
\author{Douglas E. Iannucci\footnote{University of the Virgin Islands.}\; and Urban Larsson\footnote{National University of Singapore, urban031@gmail.com.}}
\maketitle
\begin{abstract}
Arithmetic functions in Number Theory meet the Sprague-Grundy function from Combinatorial Game Theory. We study a variety of 2-player games induced by standard arithmetic functions, such as Euclidian division, divisors, remainders and relatively prime numbers, and their negations. \end{abstract}
%
\section*{Introduction}
Consider the following situation: two players, Alice and Bob, alternate to partition a given finite number of positive integers into components of the form of a non-trivial Euclidian division. Whoever will fail to follow the rule, because each number is a ``1'', loses the game. You are only allowed to split one number at a time. For example, if Alice starts from the number $7$, then her options are $1+\cdots +1$, $2+2+2+1$, $3+3+1$, $4+3$, $5+2$ and $6+1$; here the `+' sign from arithmetic functions becomes the disjunctive sum operator (a convenient game component separator) in the game setting. By observing that we may remove any pair of the same numbers (by mimicking strategy), and we may remove a one unless the option is the terminal position (since its set of options is empty), the set of options from $7$ simplifies to $\{1,2, 4+3, 5+2,6\}$. Suppose now that Alice starts playing from the disjunctive sum $7+2$. By the above analysis its easy to find a winning move to $2+2+2+1+2$. What if she instead starts from the composite game $7+3$?

We study 2-player normal-play games defined with the nonnegative or positive integers as the set of positions. The two players alternate turns and if a player has no move option then he/she loses. At each stage of game, the move options are the same independent of who is to play. In combinatorial game theory, this notion is referred to as {\em impartial}. Games terminate in a finite number of moves, and there is a finite number of options from each game position; i.e., games are {\em short}. This allows us to use the famous theory discovered independently by Sprague~\cite{Sprague} and Grundy~\cite{Grundy}, which generalizes normal-play {\sc nim}, analyzed by Bouton~\cite{Bouton}, into disjunctive sum play of any finite set of impartial normal-play games.

Arithmetic functions are at the core of number theory, in a similar sense that {\sc nim} and Sprague-Grundy are central to the theory of combinatorial games. We consider {\em arithmetic functions} \cite{HW} of the form $f: X\rightarrow Y$, where the set $X$ is either the nonnegative integers, $\mathbb{N}_0=\{0,1,\ldots\}$, or the positive integers, i.e., the natural numbers $\mathbb{N}=\{1,2,\ldots\}$, and where typically $Y=2^X$ is the set of all subsets of the nonnegative or positive integers respectively. In some settings, for example when the arithmetic function is counting the instances of another arithmetic function, we may take $Y =\mathbb{N}_0$; in these cases, we will refer to $f$ as a counting function, and the games as {\em counting games}. %

Arithmetic functions may conveniently be interpreted as {\em rulesets} of impartial games, and here we present old and novel games within a classification scheme. 
Each arithmetic function induces a couple of rulesets, and we let $\mbox{opt}:X\rightarrow Z$, define the set of move options from $n\in X$, given some arithmetic function $f$, with sometimes an imposed  terminal sink, or a modified codomain, for example, in the {\em powerset games} to come, $Z=2^{2^X}$; in the singleton and counting game cases we may take $Z=Y$ (modulo sometimes adjustment for $0$).

More specifically, we interprete the arithmetic functions in terms of heap games; for each game position, i.e., heap size represented by a number (of pebbles). There are two main variations.
\begin{itemize}
    \item A player can \emph{move-to} a number or a disjunctve sum of numbers, induced by the arithmetic function. 
    \item A player can \emph{subtract} a number or a disjunctive sum of numbers, induced by the arithmetic function.
\end{itemize}
We will use the name of the arithmetic function and prepend the letters 
{\sc M} or {\sc S}, respectively, for the move-to and the subtract versions of a particular game. 
The following two examples are excerpts from Section~\ref{sec:aliquots}.
\begin{example}
If the players may move-to a divisor, we get for example: from $\bs{6}$ the options are $1$,$2$ and $3$. From $7$ you may only move to $1$. Here, the divisors must be proper divisors. 
\end{example}

\begin{example}
If the players may subtract a divisor, we get for example: 
from $6$, the options are $5, 4, 3$ and $0$. From $7$ you can move to $0$ or $6$. 
\end{example}

Before this paper, instances where number theory connects with impartial games might individually have seemed like `lucky cases'. However, we feel that the relatively large number of such examples justifies a more systematic study.\footnote{See classical works such as Winning Ways \cite{WW} for results on impartial games coinciding with number theory.} 

Let us list some game rules induced by some standard arithmetic functions. When there is only one single option, we may omit the set brackets.

\begin{enumerate} 
\item The aliquot (divisor) games: 
	\begin{enumerate}
	\item {\sc maliquot}. 
	Move-to a proper divisor of your number; i.e.,
	$$\mbox{opt}(n)=\{d:d\mid n,\; 0<d<n\}.$$
	\item {\sc saliquot}.
	Subtract a divisor of your number; i.e.,
	$$\mbox{opt}(n)=\{n-d:d\mid n,\; d>0\}.$$
	\end{enumerate}
 
\item The aliquant (nondivisor) games: 
	\begin{enumerate}
	\item {\sc maliquant}.
	Move-to a nondivisor of your number; i.e.,
	$$\mbox{opt}(n)=\{k:1\le k\le n,\;k\nmid n\}.$$
	\item {\sc saliquant}.
	Subtract a nondivisor of your number; i.e.,
	$$\mbox{opt}(n)=\{n-k:1\le k\le n,\;k\nmid n\}.$$
	\end{enumerate}

\item The $\tau$-games:\footnote{Here  $\tau(n)$ counts the natural divisors of~$n$: $\tau(n) = \sum_{d\mid n}1$.  The divisor function $\tau$ is multiplicative, with $\tau(p^a)=a+1$ for all primes~$p$ and natural numbers~$a$. In one of our game settings, we use instead the number of proper divisors, and so let $\tau'=\tau-1$, so that in particular $\tau'(1)=0$ and $\tau'(2)=1$ (here we lose multiplicativity).}
	\begin{enumerate}
	\item {\sc mtau}.
	Move-to the number of proper  divisors of your number; i.e.,
	$$\mbox{opt}(n)=\tau'(n).$$
	\item {\sc stau}.
	Subtract the number of divisors of your number; i.e.,
	$$\mbox{opt}(n)=n-\tau(n).$$
	\end{enumerate}

\item The totative (relative prime residue) and the nontotative games:\footnote{The `move-to' and `subtract' variations are the same, because $(k,n)=1$ if and only if $(n-k,n)=1$.}
	\begin{enumerate}
	\item {\sc totative}.
	Move-to any relatively prime residue; i.e.,
	$$\mbox{opt}(n)=\{k:1\le k\le n:(k,n)=1\}.$$
	\item {\sc nontotative}.
	Move-to any smaller residue that is not relatively prime to your number; i.e.,
	$$\mbox{opt}(n)=\{k:1\le k< n:(k,n)>1\}.$$
	\end{enumerate}

\item The totient ($\phi$) games: 
\begin{enumerate}
	\item {\sc totient}.
	Move-to the number of relatively prime residues modulo your number; i.e.,
	$$\mbox{opt}(n)=\phi(n).$$
	\item {\sc nontotient}.
	Instead subtract this number; i.e.,
	$$\mbox{opt}(n)=n-\phi(n).$$
	\end{enumerate}

\item {\sc dividing}. Divide your number into a maximum number of equal parts, at least two; i.e.,
$$\mbox{opt}(n)=\{\; \underbrace {k+k+\cdots + k}_{m\;\text{$k$'s}} : km= n,\; m>1\}.$$

\item {\sc dividing-and-remainder}. Divide your number into a number of equal parts and a remainder, which is smaller than the other parts and possibly~0; i.e.,
$$\mbox{opt}(n)=\{\, \underbrace {k+k+\cdots +k+r}_{m\;\text{$k$'s}}\} : km+r=n,\; m>0,\;0\le r<k\,\}.$$
This game has two simpler variations, as defined in Section~\ref{sec:divrem}. 

\item {\sc factoring}. Factor your number into at least two components, and at most the number of prime factors, counting multiplicity; i.e.,
$$\mbox{opt}(n)=\{a_1 + a_2 + \cdots + a_k:1<a_1\le a_2\le\cdots\le a_k,\,a_1a_2\cdots a_k=n\,\}.$$

\end{enumerate}

Item 7 here is the game in the first paragraph of the paper. The goal of this paper is to evaluate the {\em nim-values}, a.k.a. Sprague-Grundy values, of these games, and  hence winning strategies can be computed, via the nim-sum operator, in disjunctive sum with any other normal-play game. The nim-values of a ruleset are defined via the \emph{minimal excludant function}, $\mbox{mex}:2^X\rightarrow \N_0$, where $X$ is the set of nonnegative (or positive)  integers. Let $A\subset \N_0$ be a strict subset of the nonnegative integers. Then $\mbox{mex}(A)=\min\{x : x\in \N_0\setminus A\}$ and $\SG(n)=\mbox{mex}\{\SG(x):x\in \mbox{opt}(n)\}$. Note that, if there is no move option from $n$, then $\SG(n)=0$.  Recall that the {\em nim-sum} is used to compute the nim-value of a disjunctive sum of games, i.e., $\SG(\sum n_i)=\bigoplus\SG(n_i)$, where `$\sum$' is disjunctive sum operator, and `$\bigoplus$' is the sum modulo $2$ without carry of the numbers $n_i$ in their binary representations. 

If $f$ is a counting function, then there is exactly one option. The game on a single heap reduces to a trivial she-loves-me-she-loves-me-not game, and in particular, the \SG-function reduces to a binary output, that is, $\SG(n)\in\{0,1\}$. Hence, such rulesets will be referred to as {\em binary rulesets}. The same game, however, played on several heaps, with at least one non-binary ruleset, can be highly non-trivial, and result in great complexity. The question of which heap to move on does not have a polynomial time solution in general, while many arithmetic functions are known to be intractable. Our inspiration for studying binary counting games came from Harold Shapiro's classification for the recurrence of the totient function \cite{HS}. A couple of examples will clarify these type of issues. 

\begin{example}\label{ex:div}
Let $0$ be the empty heap. Suppose that, from a heap of size $n>0$, the players can remove the number of divisors of $n$. The option of $n=1$ is $0$. A heap of size 2 also has $0$ as an option, but $1$ is the option of $3$. The \SG-sequence thus starts: $0,1,1,0,0,1$. The heap of size 5 has $3$ as the option, for which the nim-value is $0$. On one heap, while play is trivial, the problem of determining the winner is as hard as the complexity of the sequence.
\end{example}

\begin{example}\label{ex:propdiv}
Let $0$ be the empty heap. Suppose that, from a heap of size $n>0$, the players can move-to the number of proper divisors of $n$. The option of $n=1$ is $0$. The heaps of size two and three have moves to the heap with a single pebble. The number of proper divisors of $n=4$ is $2$, and hence the option is $2$. As for all primes, the option of $n = 5$ is the heap of size one. Thus, the \SG-sequence starts: $0,1,0,0,1,0$.      
\end{example}

\begin{example}\label{ex:3}
Consider binary games. Of course, even playing a disjunctive sum of binary games, gives only binary values. 
Consider, for example {\sc totient}, where $\SG(2+3+4+5)=\SG(2)\oplus \SG(3)\oplus \SG(4)\oplus \SG(5)=1\oplus 0\oplus 0 \oplus 1=0$. Hence $2+3+4+5$ is a second player winning position. To see this in play, suppose that the first player selects the heap of size $4$ and moves to $2+3+\phi(4)+5 = 2+3+2+5$. Now, $\SG(2+3+2+5)=1\oplus 0\oplus 1 \oplus 1=1$, which is a winning position for the player to move, and indeed, since every move changes the parity, we have automatic, `random' optimal play even if we play a sum of games, \emph{provided} that they are all binary. In particular if we play a disjunctive sum of totient games, then the optimal strategy is to play any move. Hence these games seem less interesting in that respect, as 2-player games, but suppose that we instead play a disjunctive sum of the totient game $G$ with the totative game $H$. Now an efficient algorithm for computing the binary value (see Theorem~\ref{thm:totient}) is interesting again. What is a sufficient move in the first player winning position $7_{{\rm totient}} + 7_{{\rm totative}}$? (There are exactly three winning moves.)
\end{example}

Those examples motivate play on arithmetic counting functions. Other examples of binary games are the {\sc fullset} games, where each move is defined by playing to a disjunctive sum of all numbers induced by the arithmetic function.

For the {\sc powerset} rulesets, the range of the opt function is the set of all subsets of natural numbers, a generic game on a single heap decomposes to play on several heaps. Hence, the full \SG-function is intrinsically motivated in the solution of a single game, even if the starting position is a single heap. 

\begin{example}\label{ex:4}
Consider the game played from a heap of size $n$, where the options are to play to any non-empty set of proper divisors of $n$. If $n=6$, then the options are the single heaps of size $1,2,3$ respectively, the pairs of heaps $1+2, 1+3, 2+3$, and the triple $1+2+3$. A heap of size one has no option, and a heap of size two or three has a heap of size one as option. Hence $\SG(6)=2$. The nim-value of each prime is one, and so on. 
\end{example}

\begin{example}\label{ex:5}
Consider the game played from a heap of size $n$, where the options are to play to any finite set of relatively prime residues smaller than $n$. If $n=5$, then the options are all nonempty subsets of $\{1,2,3,4\}$. In spite of the relatively large number of options, in this particular case, the \SG-computation becomes easy. A heap of size one has no option and so $\SG(1)=0$. Therefore, $\SG(2)=1$, and so $\SG(3)=2$. A heap of size $4$ has $1,3,1+3$ as options, and so $\SG(4)=1$. By this, obviously $\SG(5)=4$. A heap of size $6$ has few options, and easily $\SG(6)=1$. A heap of size $7$ has many options, and likewise easily $\SG(7)=8$, the smallest unused power of two. This game is revisited in Theorem~\ref{thm:powtotative}. 
\end{example}

In view of the above examples, we use the following classification of games on arithmetic functions; an  \emph{arithmetic game} satisfies one of these items.

\begin{itemize}
    \item[(i)] Play singletons from the arithmetic property;
\item[(ii)] Play the number of elements from the arithmetic property;
    \item[(iii)] Play the disjunctive sum of all numbers from the arithmetic property;
    \item[(iv)] Play any non-empty subset of numbers from the arithmetic property, as a disjunctive sum.
\end{itemize}

The word ``Play \ldots '' (read: ``Play is defined by\ldots '') is intentionally left open for interpretation. Here, it will have one out of two meanings; either the players move-to the numbers, or they subtract the numbers, from the given heap (size). The items (iii) and (iv) typically split a heap into several heaps to be played in a disjunctive sum of heaps. Note that (iii) is binary, although it does not concern counting functions. The rulesets induced by (iii) and (iv) above are not listed above, but naturally build on items 1, 2 and 4. We define them in their respective sections.

Some arithmetic functions directly induce a disjunctive sum of games, such as the division algorithm or the factoring problem. For the ruleset on Euclidian division from the first paragraph  (Section~\ref{sec:dividing}), we conjecture that the relative nim-values, $\SG(n)/n$, tend to 0 with increasing heap sizes.

In Section~\ref{sec:singleton}, we study singleton games. In Section~\ref{sec:counting}, we study counting games.  In Section~\ref{sec:dividingG}, we study dividing games, where division induces a disjunctive sum of games, and similar for Section~\ref{sec:factoring} with factoring games. In Section~\ref{sec:fullset}, we study disjunctive sum games on the full set induced by the arithmetic function. In Section~\ref{sec:powerset}, we study powerset disjunctive sum games.  Section~\ref{sec:disc} is devoted to some future direction.

For reference, let us include a table of studied rulesets, in the order of appearance, including some significant properties. The abbreviations are m-t: move-to, subtr.: subtraction, div.:divisor, rel.:relative, n.:number, pr.: problem, disj.: disjunctive. The solution functions are  defined in the respective sections, but let us list them here as well. In particular, we encounter {\em indexing functions}, where numbers with a certain property are enumerated, starting with $1$ for their smallest member, etc. In the table we find the following functions $\Omega$: number of prime divisors counted with multiplicity, $\omega$: the number of prime divisors counted without multiplicity, $\Omega_2$: the number of prime divisors counted with multiplicity, unless the divisor is 2, which is counted without multiplicity, $v$: usual 2-valuation, $i_o$: index of largest odd divisor, $i_p$: index of smallest prime divisor, \\

\vskip 10pt
\begin{tabular}{c|ccccc}
Ruleset & description&arithmetic f. & solution f. & Sec.\\\hline
{\sc maliquot}& m-t div. & aliquot & $\Omega$&\ref{sec:maliquot} \\
{\sc saliquot}& subtr. div. & aliquot & $v$& \ref{sec:saliquot} \\
{\sc maliquant}& m-t non-div. & aliquant & $i_o$& \ref{sec:maliquant} \\
{\sc saliquant}& subtr. non-div. & aliquant & partial sol.  &\ref{sec:saliquant} \\
{\sc totative}& m-t rel. prime & totative & $i_p$& \ref{sec:totative} \\
{\sc nontotative}& m-t nonrel. prime & totative & partial sol. &\ref{sec:nontotative} \\
{\sc totient}& m-t n. rel. prime & totient & Shapiro& \ref{sec:totient} \\
{\sc nontotient}& m-t num. nonrel. prime & totient  & &\ref{sec:nontotient} \\
{\sc mtau}& m-t n. div.  & $\tau$ &  observation&\ref{sec:mtau} \\
{\sc stau}& subtr. n. div. & $\tau$ & & \ref{sec:stau} \\
{\sc m}$\Omega$& m-t n. prime div. & $\Omega$ &  observation&\ref{sec:mOmega} \\
{\sc s}$\Omega$& subtr. n. prime div. & $\Omega$ &  &\ref{sec:sOmega} \\
{\sc m}$\omega$& m-t n. dist. prime div. & $\omega$ &  observation&\ref{sec:momega} \\
{\sc s}$\omega$& subtr. n. dist. prime div. & $\omega$ & &\ref{sec:somega} \\
{\sc dividing}&m-t disj. sum div.&aliquot&$\Omega_2$&\ref{sec:dividing}\\
{\sc div.-and-res.}&m-t disj. sum Eucl. div.& Eucl. div.&&\ref{sec:divrem}\\
{\sc compl.-grundy}&m-t disj. sum Eucl. div.& Eucl. div.&&\ref{sec:divrem}\\
{\sc div.-throw-res.}&m-t disj. sum Eucl. div.& Eucl. div.&$i_o$&\ref{sec:divrem}\\
{\sc res.-throw-div.}&m-t residue & Eucl. div.&yes&\ref{sec:divrem}\\
{\sc m-factoring} & m-t factoring & factoring &$ \Omega$&  \ref{sec:factoring}\\
{\sc s-factoring} & subtr. factoring & factoring &&  \ref{sec:factoring}\\
{\sc fs maliquot}& m-t disj. sum all div. & aliquot& square free  & \ref{sec:fullset} \\
{\sc ps maliquot}& m-t disj. sum div. & aliqout & &\ref{sec:powerset}\\
{\sc ps saliquot}& subtr. disj. sum div. & aliqout & $v$&\ref{sec:powerset}\\
{\sc ps maliquant}& m-t disj. sum div. & aliquant & $i_o$&\ref{sec:powerset}\\
{\sc ps saliquant}& subtr. disj. sum div. & aliquant & $i_p$&\ref{sec:powerset}\\
{\sc ps totative}& m-t disj. sum div. & totative & &\ref{sec:powerset}\\
{\sc ps nontotative}& m-t disj. sum div. & totative & &\ref{sec:powerset}\\
\end{tabular}\vskip 8pt\noindent

\section{Singletons}\label{sec:singleton}
This section concerns items 1,2 and 4 from the introduction, the aliquots, the aliquants and the totatives.

\subsection{The aliquots}\label{sec:aliquots}
The first game, {\sc maliquot}, is `{\sc nim} in disguise' (think of the prime factors of a number as the pebbles in a heap) but since the factoring problem is hard, the game is equally hard. Here, the arithmetic function is $f(n)=\{d : d\, |\, n, n\in \mathbb{N}_0\}$. In this section, the set of game positions is $\mathbb N$. Since all nonnegative integers divide 0, we do not admit 0 to the set of game positions. The second game, {\sc saliquot}, turns out to be somewhat more interesting. 

Let $n\in \N$. Then $\Omega (n)$ is the number of prime factors of $n$, counting multiplicities, and $v=v(n)$ is the 2-valuation of $n=2^vm$, where $m$ is odd. 

\subsubsection{{\sc maliquot}, move-to a proper divisor}\label{sec:maliquot}
Here, the set of move options from $n$ is $\mbox{opt}(n) = \{d : d\, |\, n, d\ne n, n\in \mathbb{N}\}$. 
\begin{example}
From 6 you have the options $1$, $2$ and $3$. From $7$ you may only move-to $1$.
\end{example}
The unique terminal position is $1$. It follows that $\SG(1)= 0$, and if $p$ is a prime then $\SG(p)=1$. We have computed the first few nim-values.
\vskip 8pt
\begin{tabular}{ccc}
$n$ & $\mbox{opt}(n)$ & $\SG(n)$\\\hline
1& $\varnothing$ & 0\\
2& 1 & 1\\
3& 1 & 1\\
4& 1,2 & 2\\
5& 1 & 1\\
6& 1,2,3 & 2\\
7& 1 & 1\\
8& 1,2,4 & 3
\end{tabular}\vskip 8pt\noindent
\begin{theorem}
Consider {\sc maliquot}. Then, for all $n$, $\SG(n) = \Omega(n)$. 
\end{theorem}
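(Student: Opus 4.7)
The proof is a routine Sprague-Grundy induction, and the ``Nim in disguise'' remark of the authors tells us exactly what the bookkeeping should be: we regard each $n = p_1^{a_1}\cdots p_k^{a_k}$ as the exponent vector $(a_1,\ldots,a_k)$, where a move is permitted to decrease any nonempty subset of coordinates (each strictly, possibly down to $0$), but with the constraint that at least one coordinate must strictly decrease. The quantity $\Omega(n) = a_1 + \cdots + a_k$ is the total number of ``pebbles,'' and the claim is that this total equals the nim-value.

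I would prove $\SG(n) = \Omega(n)$ by strong induction on $n$ (equivalently, on $\Omega(n)$, since any proper divisor of $n$ is strictly smaller). The base case is $n = 1$, where there are no moves and $\SG(1) = 0 = \Omega(1)$. For the inductive step, I would verify the mex identity by establishing two claims.

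\textbf{Upper bound.} Every option $d \in \mbox{opt}(n)$ satisfies $\SG(d) < \Omega(n)$. Indeed, if $d \mid n$ and $d \ne n$, write $d = p_1^{b_1}\cdots p_k^{b_k}$; then $0 \le b_i \le a_i$ for each $i$ and at least one inequality is strict (otherwise $d=n$), so $\Omega(d) = \sum b_i < \sum a_i = \Omega(n)$. By induction $\SG(d) = \Omega(d)$, which is bounded by $\Omega(n)-1$.

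\textbf{Lower bound.} Every value $j \in \{0,1,\ldots,\Omega(n)-1\}$ is realized as $\SG(d)$ for some proper divisor $d$. By the inductive hypothesis it suffices to exhibit, for each such $j$, a divisor $d_j \mid n$ with $\Omega(d_j) = j$. Taking any $j$ of the $\Omega(n)$ prime factors of $n$ (with multiplicity) and multiplying them produces such a $d_j$, which is automatically a proper divisor since $j < \Omega(n)$. Combining the two claims, $\SG(n) = \mex\{\SG(d):d\in\mbox{opt}(n)\} = \Omega(n)$, completing the induction. There is no genuine obstacle in this argument; the only thing to watch is that the ``proper'' in ``proper divisor'' excludes $n$ itself, which is precisely what gives the strict inequality $\Omega(d) < \Omega(n)$ needed for the upper bound.
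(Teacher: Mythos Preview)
Your proof is correct and follows essentially the same induction as the paper's: both verify the mex identity by showing that every proper divisor has strictly fewer prime factors (upper bound) and that every count $j<\Omega(n)$ is realized by some proper divisor (lower bound). If anything, your lower-bound step is slightly cleaner, since you explicitly include $j=0$ (the option $d=1$), whereas the paper's wording ranges over $x\in\{1,\ldots,k-1\}$.
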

\begin{proof}
We have that $0=\SG(1)=\Omega (1)$, since there are no options from $1$, and $1$ does not have any prime factors. Suppose that $n>1$ has $k$ prime factors, counting multiplicities. Then, for each  $x\in \{1,\ldots ,k-1\}$, there is a divisor of $n$, corresponding to a move-to a number with $x$ prime factors. Since you are not allowed to divide by $n$, the number of prime factors decreases by moving, and so there is no option of nim-value $k$, by induction. By the mex-rule, the result holds, $\SG(n)=k=\Omega(n)$.
\end{proof}

\subsubsection{{\sc saliquot}, subtract a divisor}\label{sec:saliquot}
This game is defined on the nonnegative integers, $\mathbb{N}_0$. Here $$\mbox{opt}(n) = \{n-d: \; d\, |\, n, \, d>0\}.$$ 
\begin{example}
The options of $6$ are $5$, $4$, $3$ and $0$. From $7$ you can move to $0$ or $6$.
\end{example}
Since $0$ is always an option from $n$, it is clear that the nim-value of a non-zero position is greater than $0$. The initial nim-values are:
\vskip 8pt
\begin{tabular}{ccc}
$n$ & $\mbox{opt}(n)$ & $\SG(n)$\\\hline
0& $\varnothing$ & 0\\
1& 0 & 1\\
2& 0,1 & 2\\
3& 0,2 & 1\\
4& 0,2,3 & 3\\
5& 0,4 & 1\\
6& 0,3,4,5 & 2\\
7& 0,6 & 1\\
8& 0,4,6,7 & 4
\end{tabular}\vskip 8pt\noindent
As the table indicates, the nim-values concern the 2-valuation of $n$.
\begin{theorem}
Consider {\sc saliquot}. Then 
$\SG(0)=0$. Suppose that $n>0$ and let $n=2^{k}m$, where $2\nmid m$ and $k\ge 0$. Then $\SG(n)= v(n)+1=k+1$. 
\end{theorem}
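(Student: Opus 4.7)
The plan is to prove the statement by strong induction on $n$. The base case $n=1$ is immediate from the table: the only option is $0$ with $\SG(0)=0$, hence $\SG(1)=1=v(1)+1$. For the inductive step, fix $n=2^k m$ with $m$ odd, assume the formula for all smaller positive integers, and compute the mex of the option set explicitly.

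The key observation is a clean parametrization of the divisors of $n$: every positive divisor can be written uniquely as $d=2^i d'$ with $0\le i\le k$ and $d'\mid m$. First I would handle the subcase $i<k$. Then $n-d=2^i(2^{k-i}m-d')$, and since $k-i\ge 1$ makes $2^{k-i}m$ even while $d'$ is odd, the inner factor is odd. Thus $v(n-d)=i$, and since $n-d>0$ (because $d\le 2^{k-1}m<n$), the induction hypothesis gives $\SG(n-d)=i+1$. Letting $i$ range over $0,1,\ldots,k-1$ (and, say, $d'=1$) shows that the \SG-values $1,2,\ldots,k$ all appear among the options. The option $d=n$ also contributes $\SG(0)=0$. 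So $\{0,1,2,\ldots,k\}$ is contained in the set of option values.

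Next I would check that $k+1$ does \emph{not} appear. The only remaining options correspond to $i=k$ and $d'\mid m$ with $d'<m$. Here $n-d=2^k(m-d')$; since $m$ and $d'$ are both odd, $m-d'$ is a positive even integer, so $v(n-d)=k+v(m-d')\ge k+1$. But $v(m-d')\ge 1$, so in fact $v(n-d)\ge k+1$, giving $\SG(n-d)\ge k+2$ by induction. Combined with the previous paragraph, every option has \SG-value in $\{0,1,\ldots,k\}\cup\{k+2,k+3,\ldots\}$, and the mex is therefore exactly $k+1=v(n)+1$.

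There is no real obstacle; the argument is a routine $2$-adic bookkeeping once the divisors are parametrized as $2^i d'$. The only point that needs a moment of care is the strict inequality $v(m-d')\ge 1$ which produces the gap at $k+1$ — this is precisely where the oddness of $m$ (and hence of each $d'\mid m$) is used.
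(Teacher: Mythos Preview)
Your proof is correct and follows essentially the same approach as the paper's: parametrize each divisor by its $2$-part, compute the $2$-valuation of $n-d$, and apply strong induction. The only cosmetic difference is that the paper splits into the cases $n$ odd and $n$ even before carrying out the same bookkeeping, whereas you handle all $k\ge 0$ uniformly; your presentation is, if anything, slightly cleaner.
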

\begin{proof}
The case $0\, |\, 0$ is excluded in the definition of opt. Hence there is no move from 0 and so $\SG(0)=0$. Note that if $n>0$ then $0$ of nim-value $0$ is an option.

Suppose that $n$ is odd. Then, for all $d\, | \, n$, $n-d$ is even. By induction the even numbers have nim-value greater than one. Hence, since 0 is an option, the mex-function gives $0+1=1$ as the nim-value of $n=2^{0}m$. 

Suppose that $n$ is even. Then $n-1$, which is odd, is an option. It has nim-value 1 by induction. (Hence, $\SG(n)\geqslant 2$.) Let $d=2^\ell q\leqslant n$, where we may assume that $\ell>0$, since we are interested in the even options. 

Since $d$ is a divisor of $n$, we have that $0\le \ell \leqslant k$, and $q\mid m$, with odd $m>1$. We get $n-d=2^{k}m-2^\ell q=2^\ell(2^{k-\ell}m-q)$. The number $x=2^{k-\ell}m-q$ is odd, of nim-value $1$ by induction, unless $k=\ell$.  In this case, if $m=q$, the option is $0$, so suppose $m>q$. Since both $m$ and $q$ are odd, then the option of $n$ has a greater $2$-valuation than $n$, i.e $v(n)\ge k+1$. Therefore, no option has $2$-valuation $k$, and hence by induction no option has nim-value $k+1$. 

Since $\ell$ can be chosen freely in the interval $0\le \ell< k$, by induction all nim-values $0\le \SG(n-d)\le k$ can be reached; since $m>1$, we may take $q<m$. The result follows.
\end{proof}

\subsection{The aliquants}
The aliquant games are somewhat more intricate than the aliquots, but we still have an explicit solution in the first variation. Here $f(n)=\{d : d \nmid n, n\in \mathbb{N}_0\}$. 

\subsubsection{{\sc maliquant}: move-to a non-divisor}\label{sec:maliquant}
Since all numbers divide $0$, $0$ does not have any options, and hence $\SG(0)=0$. On the other hand, $0$ does not divide any nonzero number, and hence $0$ will be an option from each number. The options are: $\opt(n)=\{d<n : d \nmid n, n\in \mathbb{N}_0\}.$
\vskip 8pt
\begin{tabular}{ccc}
$n$ & $\mbox{opt}(n)$ & $\SG(n)$\\\hline
0& $\varnothing$ & 0\\
1& 0 & 1\\
2& 0 & 1\\
3& 0,2 & 2\\
4& 0,3 & 1\\
5& 0,2,3,4 & 3\\
6& 0,4,5 & 2\\
7& 0,2,3,4,5,6 & 4\\
8& 0,3,5,6,7 & 1
\end{tabular}\vskip 8pt\noindent
In {\sc maliquant}, the 2-valuation plays an opposite role as in {\sc saliquot}; here only the odd part of $n$ determines the nim-value. Let $i_o:\N\rightarrow \N$ be the index function for largest odd factor of a given natural number. That is, if $n=2^k(2m-1)$, then $i(n) = m$. Clearly $i_o(2m-1)=m$ and $i_o(n)\le(n+1)/2$.

\begin{lemma}\label{lem:oddind}
For all $n\in \N$, the numbers in the set $\{n,\ldots ,2n-1\}$ contribute all maximal odd factor indices in the set $\{1,\ldots , n\}$. That is, $\{i_o(x) : n\le x\le 2n-1\}=\{1,\ldots ,n\}$.
\end{lemma}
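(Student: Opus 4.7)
The plan is to exhibit $i_o$ as a bijection from the $n$-element set $\{n,n+1,\ldots,2n-1\}$ onto the $n$-element set $\{1,2,\ldots,n\}$, which gives the set equality.

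First I would recall that every positive integer $x$ admits a unique representation $x=2^k(2m-1)$ with $k\ge 0$ and $m\ge 1$, so $i_o(x)=m$ is well-defined and equal to the index of the largest odd divisor of $x$ among the odd positive integers. Next, for the containment $\{i_o(x):n\le x\le 2n-1\}\subseteq\{1,\ldots,n\}$, I would simply observe that if $x=2^k(2m-1)\le 2n-1$, then already $2m-1\le x\le 2n-1$, whence $m\le n$.

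For the reverse inclusion, I would fix an arbitrary $m\in\{1,\ldots,n\}$ and produce $k\ge 0$ with $2^k(2m-1)\in[n,2n-1]$. Let $k$ be the \emph{smallest} nonnegative integer for which $2^k(2m-1)\ge n$; this exists since the quantity grows without bound. If $k=0$, then $2m-1\ge n$ is automatic, and $2m-1\le 2n-1$ follows from $m\le n$. If $k\ge 1$, the minimality of $k$ gives $2^{k-1}(2m-1)<n$, so $2^k(2m-1)<2n$, i.e.\ $\le 2n-1$. Either way $2^k(2m-1)\in[n,2n-1]$, so $m\in\{i_o(x):n\le x\le 2n-1\}$.

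Since $\{n,\ldots,2n-1\}$ and $\{1,\ldots,n\}$ have the same cardinality and $i_o$ maps the former into the latter surjectively, it is actually a bijection, and the stated set equality follows. There is no real obstacle here: the heart of the argument is the elementary fact that the interval $[n,2n-1]$ has length-to-start ratio strictly less than $2$, so it is a fundamental domain for the ``multiplication by $2$'' map on $\mathbb{N}$, forcing each odd number $2m-1\le 2n-1$ to contribute exactly one representative $2^k(2m-1)$ to the interval. The only mild care needed is in treating the $k=0$ base case of the minimality argument, to ensure the chosen representative does not overshoot $2n-1$.
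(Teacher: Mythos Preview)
Your proof is correct. It differs from the paper's argument: the paper proceeds by induction on $n$, passing from $\{n,\ldots,2n-1\}$ to $\{n+1,\ldots,2n+1\}$ by observing that $i_o(2n)=i_o(n)$ (so removing $n$ and adding $2n$ is neutral) and $i_o(2n+1)=n+1$ (supplying the new index). Your approach instead gives a direct construction: for each $m\le n$ you explicitly locate the unique power $2^k(2m-1)$ landing in $[n,2n-1]$ via a minimality argument, and then invoke equal cardinalities to get a bijection. The paper's inductive step is shorter but implicitly relies on the bijection from the inductive hypothesis to know that removing $n$ loses exactly one index; your argument is more self-contained and makes the ``fundamental domain for doubling'' structure of the interval explicit, which is arguably the conceptual content behind the lemma.
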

\begin{proof}
We use induction on~$n$. Assuming the statement of the lemma holds for all natural numbers up to~$n$, we consider the set $\{n+1, \dots, 2n-1, 2n, 2n+1\}$. As $i_o(2n)=i_o(n)$ and $i_o(2n+1)=n+1$, it follows that $\{i_o(x) : n\le x\le 2n+1\}=\{1,\ldots ,n, n+1\}$
\end{proof}
\begin{theorem}\label{thm:maliquant}
Consider {\sc maliquant}. Then, $\SG(0)=0$ and, for all $n\in \N$, $\SG(n)=i_o(n)$.
\end{theorem}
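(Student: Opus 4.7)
The plan is by induction on $n$. The base cases $n=0$ (empty option set) and $n=1$ (unique option $0$) are immediate: $\SG(0)=0$ and $\SG(1)=1=i_o(1)$. For the inductive step with $n\ge 2$, write $n=2^k(2m-1)$ so that $i_o(n)=m$. By the induction hypothesis every positive $d<n$ satisfies $\SG(d)=i_o(d)$, and $\SG(0)=0$; hence to compute the mex I only need to establish two things. \emph{Missing:} no option of $n$ has $i_o$-value equal to $m$. \emph{Present:} every value in $\{0,1,\ldots,m-1\}$ arises as $i_o(d)$ for some option $d$.

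For the missing half I would parametrise: an integer $d$ has $i_o(d)=m$ precisely when $d=2^i(2m-1)$ for some $i\ge 0$. Comparing $i$ with $k$ yields three disjoint cases (for $i<k$ such a $d$ is a proper divisor of $n$; for $i=k$ we get $d=n$; for $i>k$ we get $d>n$), and in each case $d\notin\opt(n)$. This is the easy half.

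For the present half the plan is to invoke Lemma~\ref{lem:oddind} on the range just below $n$. The key elementary observation is that the only divisor of $n$ strictly greater than $n/2$ is $n$ itself, so the block $R=\{\lfloor n/2\rfloor+1,\ldots,n-1\}$ lies entirely inside $\opt(n)$. Taking the parameter in Lemma~\ref{lem:oddind} to be $\lceil n/2\rceil$ shows that $R$ together with one boundary element (namely $n/2$ if $n$ is even, or $n$ itself if $n$ is odd) realises exactly the odd indices $\{1,\ldots,\lceil n/2\rceil\}$. A short computation using the parametrisation from the missing half identifies this boundary element as the \emph{unique} element of the enlarged block with $i_o$-value $m$; removing it therefore leaves $R$ realising every index in $\{1,\ldots,\lceil n/2\rceil\}\setminus\{m\}\supseteq\{1,\ldots,m-1\}$. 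The option $d=0$ covers the remaining value $0$.

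The step I expect to require the most care is this uniqueness assertion: that the block produced by Lemma~\ref{lem:oddind} contains exactly one element with $i_o$-value $m$, and that this element coincides with the single boundary point which must be discarded. This is what allows the removal to leave the contiguous prefix $\{1,\ldots,m-1\}$ of odd indices intact. Once it is verified, the mex is forced to equal $m$ and the induction closes with $\SG(n)=m=i_o(n)$.
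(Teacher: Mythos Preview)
Your proof is correct and follows essentially the same route as the paper: induction on~$n$, using Lemma~\ref{lem:oddind} on the block of consecutive non-divisors just below~$n$, together with the observation that every smaller positive integer with $i_o$-value $m$ is of the form $2^i(2m-1)$ with $i<k$ and hence a divisor of~$n$. Your flagged uniqueness step is harmless---the lemma places $n'$ integers onto the $n'$ indices $\{1,\ldots,n'\}$, so each index (in particular $m$) occurs exactly once, and a one-line check shows the boundary point ($n/2$ for $n$ even, $n$ for $n$ odd) is the element carrying it; the paper arrives at the same conclusion by splitting into parity cases with slightly different blocks, but the underlying idea is identical.
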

\begin{proof}
We use induction on~$n\in \N$. If~$n$ is odd, then write $n=2m-1$ whence $\{m-1,\dots,2m-3\}\subset\mbox{opt}(n)$. By Lemma~\ref{lem:oddind} and induction hypothesis, we have $\{\SG(k):m-1\le k\le2m-3\}=\{1,\dots,m-1\}$. By induction hypothesis, $\SG(k)=i_o(k)\le m/2$ for all $k<m-1$, and hence $\SG(n)= m=i_o(n)$. Even numbers are options, but they have nim-values smaller than $m/2$, by induction.

If~$n$ is even, write $n=2^km$ where $k>0$ and~$m$ is odd. Thus it suffices to prove $\SG(n)=i_o(m)$. Note that, for all positive integers~$l$, we have $i_o(l)=i_o(m)$ if and only if $l=2^jm$ for some $j\ge0$. Thus if $m=1$ then $\mbox{opt}(n)$ omits all those elements $2^j$ with index~$1$, hence $\SG(n)=1$. Otherwise, if  $m>1$, we observe that
$$\left\{2^{k-1}m+1,2^{k-1}m+2,\dots,2^km-1\right\}\subset\mbox{opt}(n).$$
If we augment this set with the element $2^km$, then by Lemma~\ref{lem:oddind} the indices of its elements are $\{1,2,\dots,2^{k-1}m+1\}$, which includes $\{1,2\dots,i_o(m)-1\}$ as a subset. However, $\mbox{opt}(n)$ contains no elements of the form $2^jm$, and hence $i_o(m)$ does not appear among the elements $i_o(x)$ for all $x\in\mbox{opt}(n)$, yet all elements of $\{1,\dots,i_o(m)-1\}$ do so appear. Thus by induction hypothesis $\SG(n)=i_o(m)$. 
\end{proof}

\subsubsection{{\sc saliquant}: subtract a non-divisor}\label{sec:saliquant}
Here we run into some mysterious sequences. We can only prove partial results. The options are: $\opt(n)=\{n-d : d \nmid n, n\in \mathbb{N}_0\}.$

\vskip 8pt
\begin{tabular}{cccccc}
$n$ & $\mbox{opt}(n)$ & $\SG(n)$&$n$ & $\mbox{opt}(n)$ & $\SG(n)$\\\hline
0& $\varnothing$ & 0 &10& 1,2,3,4,6,7& 2\\
1& $\varnothing$ & 0&11&1,2,3,4,5,6,7,8,9&5\\
2& $\varnothing$ & 0&12&1,2,3,4,5,7&4\\
3& 1 & 1&13&$1,\ldots ,11$& 6\\
4& 1 & 1&14&$1,\ldots , 6$, 8,9,10,11&6\\
5& 1,2,3 & 2&15&$1,\ldots , 9$,11,13 &7\\
6& 1,2 & 1&16&$1,\ldots , 7$, 9,10,11,13 & 7\\
7& 1,2,3,4,5 & 3&17&1,\ldots ,15& 8\\
8& 1,2,3,5 & 3&18&1,\ldots , 8,10,11,13,14&4\\
9& 1,2,3,4,5,7 & 4&19&1,\ldots ,17&9
\end{tabular}\vskip 8pt\noindent

The odd heap sizes turn out to be simple. We give some more nim-values for even heap sizes, $n=0,2,\ldots$, 
$$\SG(n)=0,0,1,1,3,2,4,6,7,4,7,5,10,12,10,13,15,8,13,9,17,17,16,11,22,\ldots$$ 
For even heap sizes $n\ge 2$, 
$$\SG(n)/n=0,1/4,1/6,3/8,1/5,1/3,3/7,7/16,2/9,7/20,5/22,5/12,6/13,5/14,$$$$13/30,15/32,4/17,13/36,9/38,17/40,17/42,4/11,11/46,11/24,\ldots$$

Sorting the ratios $\SG(n)/n$ by size, we find that the associated nim-values $[n, \SG(n)]$ for the smallest ratios, $[6,1], [10,2], [18,4], [22,5], [34, 8], [38,9], [46,11]$ satisfy $(n-2)/\SG(n)=4$. 
The half of each heap size in this sequence is odd, and we get the odd numbers $3,5,9,11, 17, 19, 23, \ldots$. We have not investigated these patterns further, but we believe that, for all $n$, $\SG(n)\ge (n-2)/4$. Indeed, by plotting the first 1000 nim-values in Figure~\ref{fig:saliquant}, this lower bound appears to continue.\\

\begin{figure}[h!]
  \centering
    \includegraphics[width=0.6\textwidth]{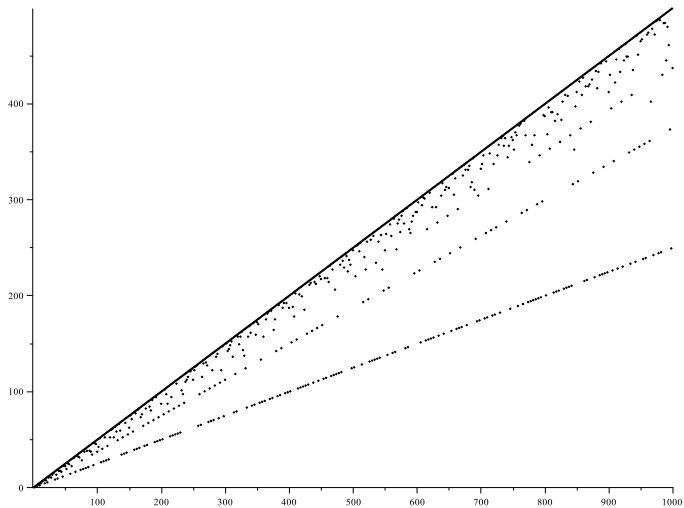}
      \caption{The initial 1000 nim-values of {\sc saliquant}.}\label{fig:saliquant}
\end{figure}

\begin{theorem}
Consider {\sc saliquant}. Then $\SG(0)=0$, and if $n$ is odd, then $\SG(n)=\frac{n-1}{2}$. Moreover,  $\SG(n)<n/2$. 
\end{theorem}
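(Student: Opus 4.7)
I plan to prove both assertions simultaneously by strong induction on $n$, coupling the odd-case formula $\SG(n)=(n-1)/2$ with the strict bound $\SG(n)<n/2$ (for $n\ge 1$). The base cases $n\in\{0,1,2\}$ are immediate, since in each case $\mbox{opt}(n)=\varnothing$ forces $\SG(n)=0$.

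For odd $n\ge 3$, the driving observation is that no even number divides $n$, so subtracting $d\in\{2,4,\ldots,n-1\}$ is always legal and realizes every odd option in $\{1,3,\ldots,n-2\}$. By the inductive odd-case formula, this set contributes precisely the nim-values $\{0,1,\ldots,(n-3)/2\}$. The remaining options are obtained by subtracting odd non-divisors $d\ge 3$, so they are even numbers at most $n-3$; applying the strict bound inductively, each such option $m$ satisfies $\SG(m)<m/2\le (n-3)/2$, and since $(n-3)/2$ is an integer this forces $\SG(m)\le (n-5)/2$. Thus $(n-1)/2$ is absent from the option nim-values while every smaller value is present, giving mex $=(n-1)/2$, which also satisfies the required bound.

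For even $n\ge 4$, both $1$ and $2$ divide $n$, so $\mbox{opt}(n)\subseteq\{1,\ldots,n-3\}$. By the inductive strict bound, every option $m$ satisfies $\SG(m)<m/2\le (n-3)/2$; since $n$ is even the right-hand side is a half-integer, so the integer $\SG(m)$ is at most $(n-4)/2=n/2-2$. Consequently mex $\le n/2-1<n/2$, completing the induction.

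The only subtlety is that the two statements really do need to be proved together: the strict bound at smaller arguments is what excludes the value $(n-1)/2$ among the \emph{even} options in the odd case, and the odd-case formula at $m=n-3$ is what saturates the bound used in the even case. Beyond this coupling, no step presents a genuine obstacle; the argument is a direct mex computation.
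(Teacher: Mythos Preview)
Your proof is correct and follows essentially the same approach as the paper: a simultaneous strong induction on the odd-case formula and the strict bound, using the odd options to fill in all nim-values below $(n-1)/2$ and the bound to rule out $(n-1)/2$ among the even options, then bounding the even case via $\mbox{opt}(n)\subseteq\{1,\ldots,n-3\}$. Your handling of the integer arithmetic (distinguishing the integer $(n-3)/2$ in the odd case from the half-integer in the even case) is in fact slightly cleaner than the paper's wording, but the structure is the same.
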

\begin{proof}
Suppose that the statement holds for all $m<n$. If $n=2x+1$ then each nonnegative integer smaller than $x$ is represented as a nim-value, and specifically, for each odd number $2y+1$, with $y<x$, $\SG(2y+1)=y$. Moreover, each odd number is an option of $x$, since each even integer is a non-divisor of $n=2x+1$. Therefore, we use that, by induction, each even number smaller than $n$ has a smaller nim-value and we are done with the first part of the proof. 

Suppose next that $n=2x$. Then, since both 1 and 2 are divisors, we know that the largest option is smaller than $2x-2$. By induction, the nim-value of any number smaller than $2x-2$ is smaller than $x-2$, namely $\SG(2x-3)=\SG(2(x-2)+1)$ is the upper bound for a nim-value of an option of $2x$. 
\end{proof}

\subsection{The totatives: move-to a relatively prime}\label{sec:totative}
Here $f(n) = \{x\mid (x,n)=1\}$. The totative games are defined by moving to a relatively prime residue. We list the first few nim-values of {\sc totative}. 
\vskip 8pt
\begin{tabular}{ccc}
$n$ & $\mbox{opt}(n)$ & $\SG(n)$\\\hline
1& $\varnothing$ & 0\\
2& 1 & 1\\
3& 1,2 & 2\\
4& 1,3 & 1\\
5& 1,2,3,4 & 3\\
6& 1,5 & 1\\
7& 1, \ldots , 6 & 4\\
8& 1,3,5,7 & 1
\end{tabular}\vskip 8pt\noindent
We have the following result. The solution involves the function $i_p$, the index of the smallest prime divisor of a given number, where the prime 2 has index 1.

\begin{theorem}
Consider {\sc totative}. The nim-value of $n>1$ is the index of the smallest prime divisor of $n$, and $\SG(1)=0$.
\end{theorem}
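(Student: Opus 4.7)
The plan is to prove the formula $\SG(n) = i_p(n)$ for $n>1$ by strong induction on $n$, using the $\mbox{mex}$ characterization of Sprague–Grundy values. Write $p_1=2<p_2=3<p_3=5<\cdots$ for the primes, so that $i_p(n)=j$ means $p_j$ is the smallest prime dividing $n$. The base case $n=2$ is immediate: the unique option is $1$ with $\SG(1)=0$, giving $\SG(2)=\mbox{mex}\{0\}=1=i_p(2)$. Note also that for $n>1$ we automatically have every option $k$ strictly smaller than $n$, since $\gcd(n,n)=n>1$ excludes $k=n$; hence the induction hypothesis applies to all options.

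For the inductive step at $n>2$, let $j=i_p(n)$. It suffices to establish two things, after which the $\mbox{mex}$ rule delivers $\SG(n)=j$:
\begin{enumerate}
    \item[(a)] no option of $n$ has nim-value $j$;
    \item[(b)] every value in $\{0,1,\dots,j-1\}$ is realised by some option.
\end{enumerate}

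For (a), I would argue as follows. Let $k$ be any option, so $\gcd(k,n)=1$; in particular $p_j\nmid k$. If $k=1$ then $\SG(k)=0\ne j$ because $j\ge 1$. If $k>1$, the induction hypothesis gives $\SG(k)=i_p(k)$, the index of the smallest prime divisor of $k$; since $p_j$ is not one of those prime divisors, $i_p(k)\ne j$. For (b), the value $0$ is achieved by $k=1$, which is always coprime to $n$. For each $1\le i<j$, the natural candidate is $k=p_i$: because $p_j$ is the smallest prime divisor of $n$, none of $p_1,\dots,p_{j-1}$ divide $n$, so $\gcd(p_i,n)=1$; moreover $p_i<p_j\le n$, so $p_i$ is in range. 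Thus $p_i\in\mbox{opt}(n)$, and by induction $\SG(p_i)=i_p(p_i)=i$.

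I do not anticipate a serious obstacle: the entire argument hinges on a clean interplay between coprimality and the indexing of primes. The only thing to verify is that the natural witnesses $1,p_1,\ldots,p_{j-1}$ are all legitimate options below $n$ to which the induction hypothesis applies, and both facts follow directly from the definition of $i_p(n)=j$. If any step were to cause trouble, it would be confirming that $p_{j-1}<n$, but this is immediate since $p_{j-1}<p_j$ and $p_j$ divides $n$, hence $p_{j-1}<p_j\le n$.
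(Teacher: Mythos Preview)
Your proof is correct and follows essentially the same approach as the paper: strong induction on $n$, showing (a) no option can have nim-value $j=i_p(n)$ because coprimality with $n$ forbids $p_j$ as a divisor, and (b) the witnesses $1,p_1,\dots,p_{j-1}$ are all valid options realising the values $0,1,\dots,j-1$. Your write-up is in fact more explicit than the paper's, particularly in verifying that each $p_i<p_j\le n$ lies in $\opt(n)$ and that all options are strictly smaller than $n$ so the hypothesis applies.
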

\begin{proof}
There is no move from $1$, because the only number relatively prime with $1$ is $1$, and options have smaller size than the number. Hence, by the definition of the mex-function, $\SG(1)=0$. Also, $\SG(2)=1$, since the only relatively prime number of $2$ is $1$, which has nim-value $0$, and $i_p(2)=1$. Suppose that the result holds for all numbers smaller than $n$. From $n$ you can only access a smaller number with no common divisor to $n$. Therefore none of its options has the same smallest prime divisor. This is one of the properties of the mex-rule. 

Thus, the index of the smallest prime divisor of $n$ will be chosen as nim-value if each prime with a smaller index appears as an option. 
But, the set of relatively prime numbers smaller than $n$ contains in particular all the relatively prime numbers of the smallest prime divisor of $n$, and hence, all the primes that are smaller than the smallest prime factor of $n$. By induction, this is the desired set of nim-values, since the move-to 1 (of nim-value $0$) is always available. 
\end{proof}

This is sequence A055396 in Sloane~\cite{Sloane}: ``Smallest prime dividing $n$ is $a(n)$-th prime $(a(1)=0)$."

\subsection{The non-totatives: move-to a non-relatively prime}\label{sec:nontotative}
Here is a table of the first few nim-values of {\sc nontotative}:
\vskip 8pt
\begin{tabular}{cccccc}
$n$ & $\mbox{opt}(n)$ & $\SG(n)$&$n$ & $\mbox{opt}(n)$ & $\SG(n)$\\\hline
0& $\varnothing$ & 0 &10& 0,2,4,5,6,8& 5\\
1& 0 & 1&11&0&1\\
2& 0 & 1&12&0,2,3,4,6,8,9,10&6\\
3& 0 & 1&13&$0$& 1\\
4& 0,2 & 2&14&0,2,4,6,7,8,10,12,&7\\
5& 0 & 1&15&0,3,5,6,9,10,12 &4\\
6& 0,2,3,4 & 3&16&0,2,4,6,8,10,12,14 & 8\\
7& 0 & 1&17&0& 1\\
8& 0,2,4,6 & 4&18&0,2,3,4,6,8,9,10,12,14,15,16&9\\
9& 0,3,6 & 2&19&0&1
\end{tabular}\vskip 8pt\noindent

 This sequence does not yet appear in OEIS~\cite{Sloane}, but curiously enough, a nearby sequence is A078898, ``Number of times the smallest prime factor of $n$ is the smallest prime factor for numbers $\le n$; $a(0)=0$, $a(1)=1$.'' For $n \ge 2$, $a(n)$ tells in which column of the sieve of Eratosthenes (see A083140, A083221) $n$ occurs in. 
Here, $\SG(15)=4\ne 3=a(15)$ is the first differing entry. In Figure~\ref{fig:nontotative} we plot the first $1000$ nim-values.\\
\begin{figure}[h!]
  \centering
    \includegraphics[width=0.6\textwidth]{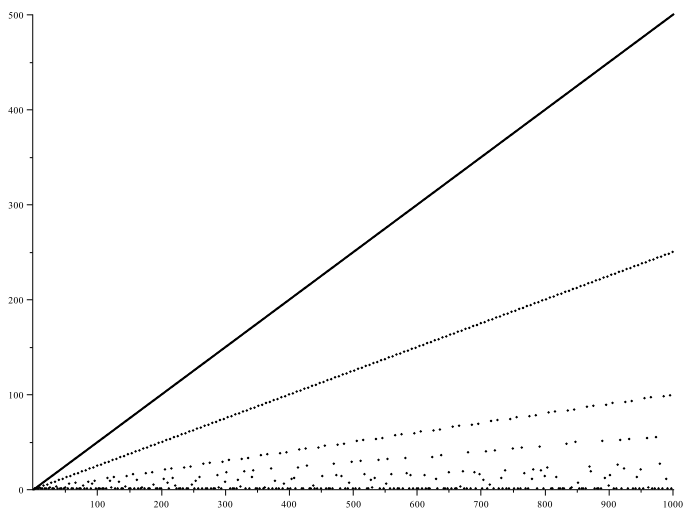}
    \caption{The initial $1000$ nim-values of {\sc nontotative}.}\label{fig:nontotative}
\end{figure}

Let us sketch a few nim-value subsequences. The primes have nim-value $1$, and the prime squares have nim-value $2$. The numbers with close prime factors, `almost squares', appear to have almost constant nim-values. On the other hand, some numbers in arithmetic progressions appear to have nim-values in almost arithmetic progressions. 
For all $n$, $\SG(2n)=n$.   We give the exact statements in Theorem~\ref{thm:nontotative} below. 
 
 For subsequences of the natural numbers, $s$, let the asymptotic relative nim-value be $$r_s=\lim_n\frac{\SG(s(n))}{s(n)},$$ if it exists. The subsequences of largest relative nim-values, apart from $s_0 =2,4,\ldots$ (with $\SG(2n) = n$), are  $s_1=3,9,\ldots$ and $s_2=5,25,35,55,65,\ldots$, with corresponding first differences $\Delta_1 = (6, 6, \ldots)$ and  $\Delta_2 = (20, 10, 20, 10,  \ldots )$, with nim-values in  $\{\lfloor (n+1)/4 \rfloor\}$ and $\{\lfloor n/10\rfloor, \lceil n/10\rceil\}$ respectively.  Thus $r_0=1/2$, $r_1=1/4$ and $r_2=1/10$, where $r_{s_i}=r_i$. The region between `prime factorization' and `purely arithmetic behavior' is still mysterious. We can identify at least one more sequence of arithmetic behavior, with $r_3\approx 1/17$, but the descriptions start to get quite technical here. Note that (ix) and (v) imply (viii); it is handy to state (v) as a separate item as it is used several places in the proof. 
 
\begin{theorem}\label{thm:nontotative}
Consider {\sc nontotative}. For $n\in \N$, 
\begin{enumerate}[(i)]
\item $\SG (n) = 1$ if and only if $n$ is prime; 
\item $\SG (n) = 2$ if and only if $n$ is a prime square; 
\item $\SG(n)\in \{3, 4\}$ if and only if $n = p_ip_{i+1}$, or $n=8$: $\SG(n)=3$ if and only if $i$ is odd, with $p_1=2$; 
\item $\SG(n)\in \{5, 6\}$ if and only if $n = p_ip_{i+2}$ or $n=12$: $\SG(n)=5$ if and only if $i\equiv 1,2\pmod {4}$. 
\end{enumerate}
Moreover, for $n\in \N$, 
\begin{enumerate}
\item[(v)] $\SG(2n)=n$; 
\item[(vi)] If $n\equiv 3\pmod 6$, then  $\SG(n)=\lfloor (n+1)/4 \rfloor$; 
\item[(vii)] if $n\equiv 5,25\pmod{30}$, then  $\SG(n)\in \{\lfloor n/10\rfloor,\lceil n/10\rceil\}$. 
\end{enumerate}
Lastly, for $n\in \N$, 
\begin{enumerate}
\item[(viii)] $\SG(n)\le n/2$; 
\item[(ix)] if $n$ is odd, then $\SG(n)\le (n+1)/4$.
\end{enumerate}
\end{theorem}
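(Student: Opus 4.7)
The plan is to establish items (v), (viii), and (ix) simultaneously as the backbone by strong induction on $n$ (with the small base cases read off from the table), and then to derive the refinements (vi) and (vii) by sharper analyses in their respective residue classes, with (i)--(iv) following from small-case enumeration combined with (v).

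For the inductive step, the even case $n = 2m$ is the cleanest. Every even $2j$ with $0 \le j \le m - 1$ is an option of $n$ because $\gcd(2j, 2m) \ge 2$, and the induction hypothesis for (v) gives $\SG(2j) = j$; thus the SG-values $\{0, 1, \ldots, m-1\}$ are all covered. Any odd option $k$ of $2m$ satisfies $k \le 2m - 1$ and by the induction hypothesis for (ix) has $\SG(k) \le (k+1)/4 \le m/2 < m$. Hence no option attains SG-value $m$, and $\SG(2m) = m$, establishing (v) at $n = 2m$.

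For the odd case, to prove (ix) I would single out the candidate $v := \lfloor (n+1)/4 \rfloor$ and show $v$ is missed by every option. An even option has the form $2j$ with $\gcd(2j, n) = \gcd(j, n) > 1$ (using $n$ odd), so $v$ is reached among even options only if $\gcd(v, n) > 1$; but any common divisor of $v$ and $n$ divides $4v - n \in \{-1, +1\}$ (according to $n \bmod 4$), so $\gcd(v, n) = 1$. An odd option $k$ reaches $v$ only if $\SG(k) = v$, which by the induction hypothesis for (ix) forces $k \ge 4v - 1 \ge n - 2$; with $k < n$ odd, the only candidate is $k = n - 2$, but $\gcd(n - 2, n) = \gcd(2, n) = 1$ rules it out. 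Thus $v$ is missing, giving $\SG(n) \le v \le (n+1)/4$. Item (viii) is then immediate: in the odd case $(n+1)/4 \le n/2$ for $n \ge 1$, and in the even case it follows from (v).

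Items (vi) and (vii) require matching lower bounds. For $n \equiv 3 \pmod 6$, (ix) already pins down $\SG(n) \le v = \lfloor (n+1)/4 \rfloor$, and the task is to certify that every value in $\{0, 1, \ldots, v - 1\}$ does appear as the SG-value of some option. Values $j$ with $\gcd(j, n) > 1$ are handled immediately by the even option $2j$; the remaining gaps are the totatives $j < v$ of $n$, which I would realize by odd options---typically smaller odd multiples of 3, whose SG-values are themselves controlled by (vi) at smaller arguments. Hence (vi) must be folded into the joint induction. Item (vii) is analogous but uses 5 as the smallest prime factor, with a parity split that decides between $\lfloor n/10 \rfloor$ and $\lceil n/10 \rceil$. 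Items (i)--(iv) are finite enumerations: a prime $p$ has only the option 0, giving SG $= 1$; $n = p^2$ adds the option $p$ with SG 1, giving SG $= 2$; for (iii) and (iv) the few options of $p_i p_{i+1}$ or $p_i p_{i+2}$ are enumerated directly and the mex read off using (v) for the even ones, with $n = 8$ and $n = 12$ verified by hand.

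The main obstacle will be the lower-bound coverage in (vi) and (vii). The upper bound (ix) went through with only \emph{one} missing value, but equality demands exhibiting \emph{every} smaller value, which forces a careful accounting of the odd options whose SG-values are themselves governed by earlier instances of the theorem. Designing the joint induction so that each of (v), (vi), (vii), (ix) may invoke the others at smaller arguments---and verifying that the parity subcase split in (vii) actually closes up---will absorb most of the write-up, with the finite-case items (i)--(iv) being essentially bookkeeping once the infinite families are in place.
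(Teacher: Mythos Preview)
Your plan is essentially the paper's own: a joint strong induction in which (v), (viii), (ix) form the backbone, (vi) and (vii) are proved as sharper statements inside their residue classes by invoking earlier instances of themselves together with (v), and (i)--(iv) are handled by examining the short list of option-types at $p$, $p^2$, $p_ip_{i+1}$, $p_ip_{i+2}$.

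The one genuine difference is your treatment of (ix). The paper does \emph{not} prove (ix) directly: it first disposes of the cases where the least prime factor is $3$ or $5$ by appealing to (vi) and (vii), and only then, for least prime factor $p\ge 7$, argues via an interval estimate that the even option $2\lfloor (n+1)/4\rfloor$ falls in a gap between consecutive multiples of~$p$. Your argument is cleaner and uniform: writing $v=\lfloor (n+1)/4\rfloor$, you observe that $4v-n\in\{-1,+1\}$, so $\gcd(v,n)=1$ and the even candidate $2v$ is never an option; and the odd candidates are squeezed out by the inductive bound $\SG(k)\le (k+1)/4$ together with $\gcd(n-2,n)=1$. This decouples (ix) from (vi) and (vii), so in your scheme (vi) and (vii) depend on (ix) rather than the reverse---a tidier dependency graph. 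Everything else (the coverage argument for (vi), the parity split in (vii), and the option-enumeration for (iii)--(iv)) matches the paper's route, including the paper's own decision to omit the details of (vii).
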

\begin{proof}
Note that $\SG(0)=0$ implies $\SG(n)>0$ if $n>0$. The induction hypothesis assumes all the items. Not that item (v) takes care of all cases where the prime $2$ divides $n$, so in all other items, we may assume that the smallest prime dividing $n$ is greater than $2$.

For (i), the only non-relatively prime number of a prime is 0. Hence $\SG(p)=1$ if $p$ is prime. If $n=pm$ is not a prime, then there is a move to the prime divisor $p$, a non-relative prime, and there is a move to 0. Hence the nim-value is greater than one. 

For (ii) we consider prime squares $p^2$, and note that each option is of the form $np$, $0\le n\le p-1$. In particular there are moves $p^2\mapsto 0$ and $p^2\mapsto p$, as noted in the first paragraph. Moreover, by induction we assume that $\SG(m) = 2$ if and only if $1<m<p^2$ is a prime square. Then $m\ne np$, and so, by the minimal exclusive algorithm, $\SG(p^2) = 2$. For the other direction, we are done with the cases $0,1$ and primes. Consider the composite $n=pm$, not a prime square, where $p$ is the smallest prime factor. Then there is a move to $p^2$, and hence $\SG(n)\ne 2$.

For (iii) we begin by proving that $\SG(n)\in \{3, 4\}$ if $n = p_ip_{i+1}$ or $n=8$, and where the nim-value is three if and only if the index of the smaller prime is odd. The base case is $\SG(2\cdot 3)=3$, and where the exception $n=8$ is by inspection. For the generic case, each option is of the form $np_i$, $0\le n\le p_{i+1}-1$ or $np_{i+1}$, $0\le n\le p_{i}-1$. In particular, there is a move to $p_i$ (and to $p_{i+1}$) of nim-value one, and there is a move to $p_i^2$ of nim-value $2$. We must show that there is no move to another prime pair of the same form, i.e., $p_jp_{j+1}$, with $j$ of the same parity as $i$. Observe that there is a move to $p_{i-1}p_i$, with $j+1=i$, but there is no move to any other almost square $p_jp_{j+1}$. By induction, this observation suffices to find a move to nim-value $3$, if $i$ is even. For the other direction, we must show that $\SG(n)\not \in\{3,4\}$ if $n$ is not an almost square. We are done with the cases $n$ a prime or a prime square. Suppose that $n=px$ is not of the mentioned form, where $p$ is the smallest prime factor of $n$. The case $p=2$ is dealt with in item (v), so lets assume $p>2$. Then, there is a move to $pq$ (non-relative prime with $n$), where $q$ is the smallest prime larger than $p$, because by assumption, $x>q$. And there is a move to  $pq$, where $q$ is the largest prime smaller than $p$.

For (iv), we study the case $n = p_ip_{i+2}$. If $i=1$, then $n=10$, and $\SG(10)=5$. If $i=2$, then $n=21$, and, by inspection,  $\SG(21)=5$. For the general case, among the options we find $0,p_i,p_i^2, p_ip_{i+1},p_{i+1}p_{i+2}$. Hence, by the previous paragraphs, the options attain all nim-values smaller than 5. Next, suppose that $i\equiv 1,2\pmod 4$, and we must show that there is no option of the same form, to create nim-value $5$. Each option is a multiple of one of the primes $p_i$ and $p_{i+2}$. The only possibility would be the option $p_{i-2}p_i$. But $i-2\equiv 0,3 \pmod 4$. Hence no option has nim-value $5$. The analogous argument suffices to show that no option has nim-value $6$ if $i\equiv 0,3\pmod 4$, $i\ge 3$. The special case $n=12=2\times 2\times 3 $ is not an option, by $i\ge 3$. On the other hand, the argument shows that there is an option to nim-value~$5$. Consider the other direction. Suppose that $n=p_ix$, where $p_i>2$ is the smallest prime in the factorization of $n$. (The case $p=2$ is dealt with below.) If $x>p_{i+2}$ then there is a move to $p_ip_{i+2}$, and if $i\ge 3$, then there is a move to $p_{i-2}p_i$.  If $i=2$, then there is a move to $12$ of nim-value $6$. That concludes this case. If $p_i<x<p_{i+2}$, then $x=p_{i+1}$, since $p_i$ is the smallest prime in the decomposition of $n$, and we are done with this case.

For (v), we verify that, for all $n$, $\SG(2n)=n$. The options are of the forms $2j$, with $0\le j\le n$, and so, induction on (v) gives that each nim-value  smaller than $n$ can be reached. Moreover induction on (viii) gives that nim-value $n$ does not appear among the options. 

For (vi) we must prove: if $n\equiv 3\pmod 6$, then $\SG(n)=\lfloor (n+1)/4 \rfloor$. The claimed nim-value sequence for the positions $3,9,15,21,27,\ldots$ is $\lfloor (3+1)/4 \rfloor, \lfloor (9+1)/4 \rfloor,\lfloor (15+1)/4 \rfloor,\ldots$, which is $1,2, 4, 5, 7,8,\ldots$. Clearly $n$ has each smaller position of the same form as an option. Precisely, the multiples of 3 are missing in the nim-value sequence. But, induction on (v), the multiples of $6$ have nim-values multiples of $3$, and indeed, all multiples of $6$, smaller than $n$ are options of $n$. By induction on item (ix), since $n$ is odd, the nim-value $\lfloor (n+1)/4 \rfloor$ does not appear among its options. The proof of (vii) is similar to (vi), but more technical, so we omit it.

Item (viii) follows directly by induction (for example, if $n$ is even then $n-2$ is the largest option and $\SG(n-2)\le n/2-1$). 

For item (ix), assume that $p>2$ is smallest prime divisor of $n$. The cases with $p\le 5$ have already been proved in items (vi) and (vii). Hence $p>5$. It follows that with $t=\lfloor \frac{n}{2p}\rfloor$, $tp+3< n/2<(t+1)p-3$. It follows that the nim-value $\lfloor \frac{n+1}{4}\rfloor$ cannot be reached from $n$, by options of the form in (v). On the other hand it cannot be reached, by moving to an odd number, since options $n-2p$ or smaller produce too small nim-values, by induction.
\end{proof}

We do not yet know, if all nim-values can be obtained by analogous reasoning. The initial occurrences (as for $n=12$ in the proof above) of nim-values that do not follow general patterns may complicate things.

\section{Counting games}\label{sec:counting}
This section concerns rulesets as in item (i) in the introduction. \emph{Binary games} have only one option per heap. At each stage of play, the decision problem reduces to which one of the heaps to make the move.  
The nim-value of any sum of binary games is binary, that is, each nim-value $\in \{0,1\}$. Indeed, the nim-value of a given disjunctive sum of binary games is 0 if and only if the number of heaps of nim-value one is even. Of course, the nim-value sequence for any given ruleset is valid in the much larger context of all normal-play combinatorial games.  

We begin in Section~\ref{sec:totient}, by solving {\sc totient}, and then we sketch a classification scheme for {\sc nontotient}. Then we  list nim-values of other open counting games. 

\subsection{Harold Shapiro and the totient games}\label{sec:HS}
Recall that Euler's totient (or $\phi$) function counts the number of relatively prime residues of a given number. For example $\phi(7)=6$ and $\phi(6)=2$. Recall that this function is multiplicative. 
This is where we can apply a known result by Harold Shapiro ``An arithmetic function arising from the $\phi$ function" \cite{HS}.  The fundamental theorem for iteration of the Euler $\phi$ function in his work is as follows. For all $x$, let $\phi^i(n)=\phi^{i-1}(\phi(n))$. Since $\phi^i(n) < \phi^{i-1}(n)$ and $\phi(n)$ is even, if $n>2$, we have that, for all $n$ and some unique $i>0$ (depending on $n$ only)
\begin{align}\label{class}
\phi^i(n)=2. 
\end{align}
 This lets us define the Class of $n$, as $C(n)=i$, when (\ref{class}) holds, and otherwise $C(1)=C(2)=0$.

\begin{theorem}[\cite{HS}]\label{thm:C}
Let $m,n\in \N$. If  $n$  is odd, then $C(n) = C(2n)$, and otherwise $C(n)+1 = C(2n)$. In general, if either $m$ or $n$ is odd, then $C(mn)=C(m)+C(n)$. Otherwise, that is, if both $m$ and $n$ are even, then $C(mn)=C(m)+C(n)+1$. 
\end{theorem}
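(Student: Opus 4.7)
I would prove the theorem in three layers: first the two doubling identities, then a reduction that turns the whole multiplicative claim into a single statement about two odd factors, and finally a direct proof of that statement via complete additivity of $C$ on the odd prime-power decomposition.

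\textbf{Doubling.} Start from $\phi(2n)=\phi(n)$ for odd $n$ (since $\gcd(2,n)=1$) and $\phi(2n)=2\phi(n)$ for even $n$ (write $n=2^am$ with $m$ odd; both sides equal $2^a\phi(m)$). The odd case $C(2n)=C(n)$ is immediate because $\phi^i(2n)=\phi^i(n)$ for all $i\ge 1$, and $n=1$ is built into the definition $C(1)=C(2)=0$. The even case $C(2n)=C(n)+1$ needs a short induction on $n$: the base $n=2$ gives $C(4)=1=C(2)+1$, and the step uses $C(2n)=1+C(2\phi(n))$ together with the inductive application of the same identity to $\phi(n)$, which is itself even whenever $n\ge 4$. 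Iterating yields the closed formula $C(2^am')=C(m')+(a-1)$ for any odd $m'$ and $a\ge 1$ (with $a=0$ giving simply $C(m')$).

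\textbf{Reduction and odd case.} Writing $m=2^am'$, $n=2^bn'$ with $m',n'$ odd and substituting the closed formula, each of the three multiplicative cases (both odd, one odd, both even) collapses to the single claim $C(m'n')=C(m')+C(n')$ for odd $m',n'$. To prove this I would in fact establish the stronger statement that $C(n)=\sum_ie_iC(p_i)$ for every odd $n=\prod_i p_i^{e_i}$, by strong induction on $n$. Compute $\phi(n)=\prod_i p_i^{e_i-1}(p_i-1)$, and for each $i$ write $p_i-1=2^{s_i}t_i$ with $s_i\ge 1$ and $t_i$ odd, so that $\phi(n)=2^S Q$ with $S=\sum_i s_i\ge 1$ and $Q=\prod_i p_i^{e_i-1}t_i<n$ odd. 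The doubling formula then gives $C(n)=S+C(Q)$; the induction hypothesis expands $C(Q)$ along the prime factorization of $Q$; a brief rearrangement reduces the target $C(n)=\sum_i e_iC(p_i)$ to the per-prime identity $C(p_i)=s_i+C(t_i)$. That identity is itself $C(p)=1+C(\phi(p))=1+C(2^s t)=s+C(t)$ applied to each $p_i$, and is legal because $t_i<p_i\le n$ places it within the induction hypothesis.

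\textbf{Main obstacle.} The genuine difficulty is that $\phi$ is multiplicative only on coprime arguments, so there is no direct recursion expressing $C(mn)$ in terms of $C(m)$ and $C(n)$ when $\gcd(m,n)>1$. The route above sidesteps this by proving complete additivity of $C$ on the odd prime-power decomposition, rather than a two-variable product rule. The technical price is the bookkeeping in $Q$, whose primes may mix the $p_j$ with whatever odd primes appear in the various $t_i$; the saving grace is that the conjectured additive function $n\mapsto\sum_i e_i C(p_i)$ does not care how the primes group, so the contributions regroup on their own and the per-prime identity $C(p_i)=s_i+C(t_i)$ carries the rest of the argument.
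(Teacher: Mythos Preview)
The paper does not give its own proof of this theorem: it is quoted verbatim from Shapiro~\cite{HS} and used as a black box to deduce Theorem~\ref{thm:totient}. So there is nothing in the paper to compare your argument against.

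That said, your proposal is a correct and self-contained proof. The doubling identities are right (the odd case from $\phi(2n)=\phi(n)$, the even case by strong induction via $C(2n)=1+C(2\phi(n))$ with $\phi(n)$ even once $n\ge 4$), and they do yield the closed form $C(2^am')=C(m')+a-1$ for odd $m'$ and $a\ge 1$. Your reduction of all three multiplicative cases to the single odd-times-odd statement is clean and accurate. The complete-additivity induction on odd $n$ also goes through: with $\phi(n)=2^SQ$ one gets $C(n)=S+C(Q)$, the hypothesis on $Q<n$ unpacks $C(Q)$ over primes, and regrouping against $\sum_i e_iC(p_i)$ leaves exactly the per-prime identity $C(p_i)=s_i+C(t_i)$, which follows from $C(p_i)=1+C(2^{s_i}t_i)=s_i+C(t_i)$ by the doubling formula.

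One small imprecision: the constraint $t_i<p_i\le n$ is not what licenses the per-prime identity $C(p_i)=s_i+C(t_i)$ (that follows directly from the already-established doubling formula); rather, it is what lets you invoke the induction hypothesis on each $t_i$ during the regrouping step, so that the prime contributions coming from the various $t_i$ inside $C(Q)$ collapse to $\sum_i C(t_i)$. You have the right inequality for the right reason, just attached to the wrong clause.
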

For example $\phi(7)^2 = \phi(6) = 2$, so $C(7)=2$. In general, for primes $p$, $\phi(p)^2=\phi(p-1)$, so $C(p)=C(p-1)+1$. Here is an example: $C(15)=C(3)+C(5)=1+C(4)+1=2+C(2) +C(2)+1=3$, and note that $\phi(15) = \phi(3)\phi(5)=2\cdot 4=8$. Moreover $\phi(8)=4$ and $\phi(4)=2$, so indeed $\phi^3(15)=2$.

This result lets us compute the nim-values of the first totient ruleset, {\sc totient}, simply by recalling the $\phi$-values for the primes.

\subsubsection{{\sc totient}, the move-to variation of the totient game}\label{sec:totient}
As mentioned, the nim-value table contains  only $0$s and $1$s, and we call this kind of games `binary games'.  Indeed, there is only one option from $n$, namely $\phi(n)$, the number of relatively prime residues of $n$. 
\vskip 8pt
\begin{tabular}{ccc}
$n$ & $\mbox{opt}(n)$ & $\SG(n)$\\\hline
1& $\varnothing$ & 0\\
2& 1 & 1\\
3& 2 & 0\\
4& 2 & 0\\
5& 4 & 1\\
6& 2 & 0\\
7& 6  & 1\\
8& 4 & 1
\end{tabular}\vskip 8pt\noindent

Here are a few more nim-values in the \SG-sequence: $$01001,01111,01010,01100,00101,0001$$ where the ``,'' is for readability. Each game component has a forced move, that if played alone may be regarded as an automaton. Starting from 8, for example, the iteration of $\phi$ gives the sequence of moves $8\mapsto 4\mapsto 2\mapsto 1$, and the $\SG$-sequence, of course is alternating between 0s and 1s, terminating with the 0 at position 1. If played on a disjunctive sum of {\sc totient}, the nim-value sequence is of course also binary, alternating between 0s and 1s, and it is 0 if and only if there is an even number of heaps of nim-value 1. Suppose that we play $7_{\rm t}+7_{\rm s}$, where the first $7$ is {\sc totient} and the second $7$ is {\sc subtraction}$\{1,2\}$, with nim-value sequence $0,1,2,0,1,2,0,\ldots$, say with sink number $1$ on both components. Then there are exactly two winning moves to $6_{\rm t}+7_{\rm s}$ or $7_{\rm t}+5_{\rm s}$. Intelligent play from a general position $m_{\rm t}+n_{\rm s}$ requires full understanding of {\sc totient}.

\begin{theorem}\label{thm:totient}
Consider {\sc totient}, and let $C$ be as in Theorem~\ref{thm:C}. Then $\SG(1)=0$, and for $n>1$, $\SG(n)=C(n)+1\pmod 2$.
\end{theorem}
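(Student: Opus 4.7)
The plan is to exploit the fact that {\sc totient} is a binary counting game: from any $n>1$ there is a unique option, $\phi(n)$. Hence a single component is a forced chain
\[
n\to\phi(n)\to\phi^2(n)\to\cdots\to 2\to 1,
\]
and by the mex-rule the game is essentially a parity count of the length of this chain.

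First I would record the one-step recursion. Since $\opt(n)=\{\phi(n)\}$ for $n>1$, the mex-rule gives $\SG(n)=\mex\{\SG(\phi(n))\}=1-\SG(\phi(n))$, while $\SG(1)=0$ because $1$ has no option. By a trivial induction on $n$, $\SG(n)$ equals the parity of the iteration length
\[
L(n):=\min\{\,i\ge 0:\phi^i(n)=1\,\}.
\]

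Second, I would translate $L(n)$ into Shapiro's class function $C(n)$ supplied by Theorem~\ref{thm:C} and (\ref{class}). For $n>2$ the class $C(n)$ is by definition the unique $i$ with $\phi^i(n)=2$; applying $\phi$ once more reaches $1$, so $L(n)=C(n)+1$. The boundary cases agree: $C(2)=0$ and $L(2)=1$, so $L(n)=C(n)+1$ holds for every $n\ge 2$. Combining with the first step yields $\SG(n)\equiv L(n)\equiv C(n)+1\pmod 2$ for every $n>1$, which is precisely the claimed formula.

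There is essentially no obstacle here: the only subtle point is making sure the $n=2$ base case matches the formula (since $C(2)=0$ by fiat, not by the defining equation $\phi^i(n)=2$ with $i>0$), and this is handled by the direct check above. All heavy lifting about the behavior of iterated $\phi$ is already encoded in the definition of $C$ and in Theorem~\ref{thm:C}, so the proof does not need to invoke multiplicativity explicitly—the forced-chain structure of a binary game does all the work.
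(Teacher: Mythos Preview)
Your argument is correct and is essentially the approach the paper has in mind: recognize that {\sc totient} is a forced chain, so $\SG(n)$ is the parity of the number of steps to the terminal $1$, and then identify that step count with $C(n)+1$ via the definition of the class function. The paper's one-line ``Use Theorem~\ref{thm:C}'' is in fact slightly misleading---as you observe, the multiplicative identities in Theorem~\ref{thm:C} are not needed here; only the \emph{definition} of $C$ from \eqref{class} is used---so your write-up is more precise than the paper's own proof.
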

\begin{proof}
Use Theorem~\ref{thm:C}.
\end{proof}

In general, thus it suffices to compute the parity of $C(n)$ and, given the factorization of $n$, apply Theorem~\ref{thm:C}. For example, $C(2^3)=C(2)+C(2^2)+1=3C(2)+2=2$, and without looking into the table, we get $\SG(8)= 1$. For another example, if $n=2\cdot 3^7\cdot 11$, then $C(n)= 7C(3)+C(11)=7 + 3=10$, since $\phi(3)=2$ and $\phi^3(11)=2$. Therefore $\SG(48114)=(C(48114)+1)\pmod 2=11\pmod 2=1$, and we find a unique winning move  $48114_{\rm t}+3_{\rm s}\mapsto 48114_{\rm t}+2_{\rm s}$, where {\rm s} is still {\sc subtraction}$\{1,2\}$. 

\subsubsection{{\sc nontotient}, the subtraction variation of the totient game.}\label{sec:nontotient}
From a given number $n$, subtract the number of relatively prime numbers smaller than $n$. We cannot adapt Theorem~\ref{thm:totient}, because it relies on iterations where you instead move-to this number, and the authors have not yet found a similarly efficient tool. Let us list the initial options and nim-values. An alternative way to think of the options is: move-to the number of nonrelative primes, including the number. The nim-values alternate for heaps that are powers of primes, starting with $\SG(p^0)=0$. This happens, because the number of nonrelatively prime numbers smaller than or equal a prime power $p^k$ is $p^{k-1}$. Hence, $\SG(p^{k})=0$ if and only if $k$ is even. Since $\phi$ is multiplicative it is easy to compute $f(n)=n-\phi(n)$, for any $n$, or get a formula for $f$, for any given prime decomposition of $n$. However, $f$ is not multiplicative, which limits the applicability of such formulas. In some special cases, we can use the proximity to powers of primes for fast computation of the nim-value. Take the case of $n=p^kq$, for some distinct primes $p$ and $q$. Then $f(n)=n-\phi(n)=p^{k-1}(q +p-1)$. Whenever $q+p-1$ is a power of the prime $p$, the nim-value of $n$ is immediate by the parity of the new exponent. Take for example $p=2$ and $q=7$. Then $p+q-1=2^3$, and so we can find the nim-value of, for example $n=7168=2^{10}\times 7$ gives the exponent $9+3=12$ and so $\SG(7168)=0$. Similarly, with $p=3$ and $q=7$, we can easily compute $\SG(413343)=1$, because $p+q-1=3^2$, and $413343=3^{10}\times 7$.

Let `$\mathrm{dist}$' denote the number of iterations of $f$ to an even power of a prime. We get the following suggestive table of the first few nim-values. We leave a further classification of  $\mathrm{dist}$ as an open problem.

\vskip 8pt
\begin{tabular}{cccc}
$n$ & $\mbox{opt}(n)$ & {\rm dist} & $\SG(n)$\\\hline
1& $\varnothing$ &0& 0\\
2& 1 &1& 1\\
3& 1 &1& 1\\
4& 2 &0& 0\\
5& 1 &1& 1\\
6& 4 &1& 1\\
7& 1  &1& 1\\
8& 4 &1& 1\\
9&3&0&0\\
10&6&2&0\\
11&1&1&1\\
12&8&2&0\\
13&1&1&1\\
14&8&2&0\\
15&7&2&0\\
16&8&0&0
\end{tabular}\vskip 8pt\noindent

\subsection{The $\tau$-games}
The \SG-sequences of {\sc mtau} and {\sc stau} do not yet appear in OEIS. The number of divisors is multiplicative in the following sense: $\tau(n)=(a_1+1)\cdots (a_k+1)$, where $n=p^{a_1}_1\cdots p^{a_k}_k$.

\subsubsection{Move-to the number of proper divisors}\label{sec:mtau}
Consider {\sc mtau}, where the single option is the number of proper divisors.\footnote{Note that if we remove the word proper here, then both 1 and 2 become loopy, and thus all games would be drawn. See also Section~\ref{sec:disc} for some more reflections on `loopy' or `cyclic' games.} A heap of size one has no option, so the nim-value sequence starts at $\SG(1)=0$. Let us list the first few nim-values.

\vskip 8pt
\begin{tabular}{ccc}
$n$ & $\mbox{opt}(n)$ & $\SG(n)$\\\hline
1& $\varnothing$ & 0\\
2& 1 & 1\\
3& 1 & 1\\
4& 2 & 0\\
5& 1 & 1\\
6& 3 & 0\\
7& 1 & 1\\
8& 3 & 0\\
9& 2 & 0
\end{tabular}\vskip 8pt\noindent

Note that each prime has nim-value $1$ because they have only one proper divisor. From this small table we may deduce many more nim-values. The first few 0-positions are $$1, 4, 6, 8, 9, 10, 12, 14, 15, 18, 20, 21, 22, 24, 25, 26, 27, 28, 30,\ldots $$ Note that $16$ is the first composite number that is not included, and $36$ is the second one, and then $48, 80,81,100,$ etc. What is special about these composite numbers? 

The sequence of all ones has some resemblance to the sequence of all numbers with a nonprime number of proper divisors. As mentioned, $16$ and $36$ are the first composite members of this sequence. These two numbers are the smallest composite numbers with a composite, i.e., nonprime,  number of proper divisors, such numbers generalize the primes, because primes also have a nonprime number of proper divisors. We are interested in the smallest number $n=p^{a_1}_1\cdots p^{a_k}_k$, for which $\tau(n)-1=(a_1+1)\cdots (a_k+1)-1\in \{16,36,48, 80,\ldots\}$, that is, the smallest number $n$, such that $(a_1+1)\cdots (a_k+1)\in \{17,37,49, 81,\ldots\}$. An obvious candidate is $n=2^{16}$, with $a_1=16$, and otherwise $a_i=0$. But it turns out that $n=2^6\cdot 3^6=46656<2^{16}$ gives $\tau(46656)=(6+1)(6+1)=49$, and this is indeed the smallest such number. Thus, we have the following observation. 

\begin{observation}
Consider {\sc mtau}. If $n< 46656$, then $\SG(n)=1$ if and only if $n$ consists of a nonprime number of divisors. 
 \end{observation}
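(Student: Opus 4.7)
The ruleset {\sc mtau} is binary: from $n>1$ the only option is $\tau'(n) = \tau(n) - 1$, so $\SG(n) = 1 - \SG(\tau'(n))$ and $\SG(1)=0$. Equivalently, iterating $\tau'$ from $n$ produces a strictly decreasing trajectory terminating at $1$, and $\SG(n)$ equals the parity of its length; in particular every prime $p$ has $\tau'(p)=1$ and hence $\SG(p)=1$.

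The plan is induction on $n$ for $2\le n < 46656$, split by the shape of $\tau'(n)$. When $\tau'(n)=1$ (i.e.\ $n$ is prime), $\SG(n)=1$ and $1$ is nonprime; when $\tau'(n)=p$ is itself a prime, $\SG(p)=1$ forces $\SG(n)=0$ while $p$ is prime; and when $\tau'(n)=m$ is composite, the desired $\SG(n)=1$ is equivalent to $\SG(m)=0$, which, by the inductive hypothesis at $m<n$, holds iff $\tau'(m)$ is prime. The inductive step therefore closes provided the following arithmetic condition holds in the range: whenever $\tau'(n)$ is composite, $\tau'(\tau'(n))$ is prime.

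Let $\mathcal{B}=\{16,36,48,80,\ldots\}$ denote the composites $m$ with $\tau'(m)$ itself nonprime---the ``exceptional'' composites flagged just before the observation. The required arithmetic condition is precisely $\tau'(n)\notin\mathcal{B}$, i.e.\ $\tau(n)\notin\mathcal{B}+1=\{17,37,49,81,\ldots\}$, for every $n<46656$.

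The main obstacle is this last arithmetic verification, which must be exhaustive. By multiplicativity, $\tau(n)=\prod(a_i+1)$ for $n=\prod p_i^{a_i}$; for each target $t\in\mathcal{B}+1$ there are only finitely many factorizations of $t$ into integers $\ge 2$, and assigning the larger exponents to the smaller primes yields the minimum $n$ realizing each factorization. Thus $t=17$ (prime) forces $n=p^{16}$ with minimum $2^{16}=65536$; $t=37$ (prime) forces $n=p^{36}$; $t=49=7\cdot 7$ permits $n=p^{48}$ or $n=p^{6}q^{6}$, with minimum $2^{6}\cdot 3^{6}=46656$ exactly saturating the bound. One then sweeps every target $t\in\mathcal{B}+1$ up to the maximum value of $\tau$ achievable below $46656$ and verifies, using all factorizations of $t$, that no smaller $n$ appears; this step-by-step factorization book-keeping is where care is required, particularly for $t$ with rich multiplicative structure.
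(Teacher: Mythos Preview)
Your approach is exactly the one the paper sketches in the paragraph preceding the observation: reduce the claim to the arithmetic assertion that no $n<46656$ has $\tau'(n)\in\mathcal{B}$, equivalently $\tau(n)\in\mathcal{B}+1$, and then for each target $t\in\mathcal{B}+1$ find the least $n$ with $\tau(n)=t$ by assigning the large exponents to the small primes.

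The gap is that you do not finish the sweep---you stop after $t=17,37,49$ and promise the rest---and in fact the rest fails. You list $80\in\mathcal{B}$ (correctly, since $\tau'(80)=9$ is nonprime), so $81\in\mathcal{B}+1$. The factorization $81=3\cdot3\cdot3\cdot3$ yields exponents $(2,2,2,2)$ and hence
\[
n \;=\; 2^{2}\cdot 3^{2}\cdot 5^{2}\cdot 7^{2} \;=\; 210^{2} \;=\; 44100 \;<\; 46656,
\]
with $\tau(44100)=81$ and $\tau'(44100)=80\in\mathcal{B}$. The trajectory $44100\mapsto 80\mapsto 9\mapsto 2\mapsto 1$ has even length, so $\SG(44100)=0$, while $\tau'(44100)=80$ is nonprime; the observation therefore predicts $\SG(44100)=1$. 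Thus the statement as written is false at $n=44100$, and the bound $46656$ should be $44100$.

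The paper makes the same slip: it lists $81$ among the targets but then asserts, without checking the four-prime-squared factorization, that $46656$ ``is indeed the smallest such number.'' (Incidentally, both you and the paper also omit $64$ from $\mathcal{B}$, since $\tau'(64)=6$ is nonprime; that omission happens not to affect the minimum, but it shows the bookkeeping needs more care than either version gives it.)
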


 We note that neither sequence is listed in OEIS (see Section~\ref{sec:mOmega} for a similar sequence that is listed). 

\subsubsection{Subtract the number of divisors}\label{sec:stau}
Consider {\sc stau}. This variation has a \SG-sequence beginning with $\SG(0)=0$. A heap of size one has one divisor, with an option to zero. A heap of size two has two divisors and hence the option is zero, and so on: $0,1,1,0,0,1,0,0,1,1,1,0,1,1,0,1,1,0,0,1,1,1$.
The `1's occur at $1,2,5,8,9,10,12,13,15,16,19,20\ldots$

\subsection{The $\Omega$ and $\omega$-games}\label{sec:omega}

The sequence of number of prime factors counted with multiplicity, is called $\Omega(n)$. Otherwise, when only the distinct primes are counted, it is called $\omega(n)$.\footnote{That is, if~$n$ has canonical form $n=p_1^{a_1}p_2^{a_2}\cdots p_k^{a_k}$, $\Omega(n)=a_1+a_2+\cdots+a_k$ and $\omega(n)=k$.}

Somewhat surprisingly, nim-value sequences for the games that count the number of prime divisors, do not yet appear in OEIS. 

\subsubsection{Move-to the number of prime divisors}\label{sec:mOmega}
The nim-value sequence of {\sc m}$\Omega$ starts at a heap of size one, of nim-value $0$, by definition. Any prime, has a move-to one, so all primes have nim-value one, a square has a move-to the heap of size 2, and hence has nim-value $0$, and so on. The nim-value sequence starts:
 $$0,1,1, 0,1, 0, 1, 0, 0,0,1,0,1,0,0,1,\ldots $$
 
 The indices of the ones is a generalization of the primes: $$2,3,5,7,11,13,16, 17, 19, 23,24, 29, 31, 36, 37, 40, 41 \ldots$$
 
 The number $64$ is in the sequence, and this distinguishes it from A026478. Still it is not exactly A167175, since not all numbers with a nonprime number of prime divisors are included. The sequences coincide until $2^{16} -1$ though, since the first such number to be excluded is $2^{16}$. Via a similar (but easier) reasoning as in Section~\ref{sec:mtau}, we have the following observation.
 
 \begin{observation}
Consider {\sc m}$\Omega$. If $n< 2^{16}$, then $\SG(n)=1$ if and only if $n$ consists of a nonprime number of prime divisors, counted with multiplicity. 
 \end{observation}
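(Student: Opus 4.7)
The plan is to exploit the fact that {\sc m}$\Omega$ is a binary counting game with a single option per position, so the Sprague--Grundy computation collapses to a one-step recursion
$$\SG(n) = 1 - \SG(\Omega(n)) \quad (n\ge 2),$$
with $\SG(1)=0$ by the empty-option convention. This immediately reduces the claim to computing $\SG$ on the small values that appear as $\Omega(n)$. Since every $n < 2^{16}$ has $\Omega(n) \le 15$ (with equality only at $n = 2^{15}$), I only need to pin down $\SG(k)$ for $1 \le k \le 15$.

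First I would verify the base range $k \in \{1,\dots,15\}$: here $\SG(k) = 0$ if and only if $k$ is nonprime (treating $k=1$ as nonprime). The verification is short. By the recursion, primes $p \in \{2,3,5,7,11,13\}$ satisfy $\Omega(p)=1$, hence $\SG(p) = 1 - \SG(1) = 1$. For composites $k \in \{4,6,8,9,10,12,14,15\}$, one has $\Omega(k)\in\{2,3\}$ since $2^4=16>15$; both $2$ and $3$ are prime, so the previous line gives $\SG(\Omega(k)) = 1$, whence $\SG(k) = 0$. Combined with $\SG(1)=0$, this settles the base range.

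The main statement is then a second application of the recursion: for $2 \le n < 2^{16}$, we have $\Omega(n) \le 15$, so the base range applies to $\Omega(n)$, giving $\SG(\Omega(n))=0$ exactly when $\Omega(n)$ is nonprime. Inverting via $\SG(n) = 1 - \SG(\Omega(n))$ yields the claim.

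The only real obstacle --- if one can call it that --- is seeing why the bound $2^{16}$ is sharp, which I would record as a short remark at the end of the proof: at $n=2^{16}$ one has $\Omega(n)=16$, which is nonprime, but the recursion now reaches outside the base range and gives $\SG(16) = 1 - \SG(4) = 1$, hence $\SG(2^{16}) = 0$. This is the first integer at which the value $\SG(n)$ and the nonprimality of $\Omega(n)$ disagree, identifying $2^{16}$ as the sharp threshold, in precise analogy with the role of $46656$ in the {\sc mtau} observation; the m$\Omega$ case is easier because $\Omega$ grows only logarithmically, so the required bound on $\Omega(n)$ follows directly from $n < 2^{16}$ without the additional factorization argument used for $\tau$.
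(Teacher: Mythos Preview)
Your proof is correct and follows exactly the approach the paper sketches (it gives no formal proof, only the remark that the reasoning is ``similar but easier'' than for {\sc mtau}): reduce via the one-step recursion $\SG(n)=1-\SG(\Omega(n))$, verify the base range $1\le k\le 15$ by hand, and use $\Omega(n)\le 15$ for $n<2^{16}$. Your sharpness remark at $n=2^{16}$ matches the paper's identification of $2^{16}$ as the first excluded value.
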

 
\subsubsection{Subtract the number of prime divisors}\label{sec:sOmega}
Here we consider the ruleset {\sc s}$\Omega$ `subtract the number of prime divisors'. 
A heap of size one has nim-value $0$, by definition. A heap of size two has a move to a heap of size one, and has nim-value one. A heap of size three has a move to a heap of size two, and has nim-value $0$. The nim-value sequence starts: $$0,1,0,0, 1, 1,0, 0, 1, 1, 0, 0, 1,1,0,1, 0, 1,0,1, \ldots , $$ and the indices of the ones are located at $2,5,6,9,10,13,14,16, 18, 20, 21,23 \ldots $

This sequence does not appear in OEIS.

\subsubsection{Move-to the number of distinct prime divisors}\label{sec:momega}

The nim-value sequence of {\sc m}$\omega$ `move-to number of distinct prime divisors' starts at one, of nim-value zero. The first few nim-values are: $0, 1,1,1, 1, 0, 1, 1, 1, 0, 1, 0,\ldots$, and the corresponding indices of the ones are  $2,3,4,5,7,8,9,11,13,\ldots$

The first nim-value that distinguishes it from {\sc m}$\Omega$ is for the heap of size 4. Since it has only one distinct factor, this game behaves like a prime, and the nim-value is one. Six is the first number that has more than one distinct factor. Hence $7!$ is the smallest number with distinct factors, for which the nim-value is one. 

 \begin{observation}
 If $n< 7!$, then $\SG(n)=1$ if and only if $n$ contains exactly one distinct factor. 
 \end{observation}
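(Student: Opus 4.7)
The plan is to exploit the fact that this is a binary counting game: from any $n\ge2$ the unique option is $\omega(n)$, so $\SG(n)=1-\SG(\omega(n))$, and $\SG(1)=0$. Hence, the question reduces to iterating $\omega$ until a heap of size $1$ is hit (or not), and tracking the parity of the chain.

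First I would verify the easy direction: if $n$ is a prime power (i.e.\ $\omega(n)=1$), then the unique option is the heap $1$, and $\mbox{mex}\{\SG(1)\}=\mbox{mex}\{0\}=1$, so $\SG(n)=1$. This handles every $n$ with exactly one distinct prime factor, without any upper bound on $n$. In particular, $\SG(m)=1$ for each $m\in\{2,3,4,5\}$, a fact I will need again.

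For the harder direction I would show that, for every $n<7!$ with $\omega(n)\ge 2$, one has $\SG(n)=0$. The key observation is a size-of-primorial bound: the smallest integer $m$ with $\omega(m)\ge 6$ is the primorial $2\cdot3\cdot5\cdot7\cdot11\cdot13=30030$, which exceeds $7!=5040$. Therefore, under the assumption $n<7!$, necessarily $\omega(n)\in\{2,3,4,5\}$. Since each of $2,3,4,5$ is itself a prime power, the previous paragraph gives $\SG(\omega(n))=1$, and hence $\SG(n)=\mbox{mex}\{1\}=0$.

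These two cases together prove the biconditional: $\SG(n)=1$ iff $\omega(n)=1$, for all $n<7!$. The proof is entirely routine once the primorial bound is in hand; the only step that would trip one up is forgetting that $\omega$ can be iterated and failing to restrict its range. I would not expect a genuine obstacle, but I would double-check that the bound $7!$ really is below $2\cdot3\cdot5\cdot7\cdot11\cdot13$, since this is the sole arithmetic ingredient preventing a longer $\omega$-chain of the form $n\mapsto\omega(n)\mapsto\omega(\omega(n))$ from reaching a prime power at an odd depth and thereby producing a heap of size $\ge 2$ with $\SG$-value $1$.
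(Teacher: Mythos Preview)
Your proof is correct and follows exactly the paper's (very terse) reasoning: for $n<7!$ with $\omega(n)\ge 2$ one has $\omega(n)\in\{2,3,4,5\}$, each of which is a prime power with $\SG$-value $1$, forcing $\SG(n)=0$. Your explicit primorial bound $2\cdot3\cdot5\cdot7\cdot11\cdot13=30030>7!$ in fact shows the observation holds for all $n<30030$; the paper's remark that ``$7!$ is the smallest number with distinct factors, for which the nim-value is one'' appears to be a slip, since the true threshold is $30030$.
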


\subsubsection{Subtract the number of distinct prime divisors}\label{sec:somega}
The nim-value sequence of {\sc s}$\omega$ `subtract the number of distinct prime divisors' starts at one, which does not have any prime divisor, and hence of nim-value zero. Next, two has the option one, three has the option two, and four has the option three. The first few nim-values are: $$0,1, 0,1, 0,0,1,0,1,1,0,0,1,1,0,1, 0, 0, 1, 1, 0,0,1,\ldots , $$ with $1$s at indices $2,4,7, 9,10,13,14,16,19,20,23,\ldots$ Neither of these sequences appear in OEIS.

\section{Dividing games}\label{sec:dividingG}
The ruleset {\sc dividing} deploys the notion of a disjunctive sum in their recursive definition. That is, an option is typically, with some exception, a disjunctive sum of games. A reference that goes into detail of such games is \cite{DDLP}.

\subsection{The dividing game}\label{sec:dividing}
For this game, the position is a natural number. The $Y$ in the definition of opt is not $2^X$, but instead consists of disjunctive sums of natural numbers. A player divides the current number into equal parts and we write ``+'' to separate the parts, the new game components. To avoid long chains of components, we use multiplicative notation, in the sense that $x\times y$ means $y$ copies of $x$ (that is, $x+\ldots + x$). In this notation, addition is commutative, but multiplication is not. For example $\text{opt}(10)=\{5\times 2, 2\times 5,  1\times 10\}$. The current player moves in precisely one of the components and leaves the other ones unchanged. For example, a move from $5+5$ is to $5+1\times 5=5$ (because no move is possible from $1\times 5$), and indeed, by symmetry, this is the only admissible move.  The number of options is $\tau (n)-1$.

\vskip 8pt
\begin{tabular}{ccc}
$n$ & $\mbox{opt}(n)$ & $\SG(n)$\\\hline
1& $\varnothing$ & 0\\
2& $1+1$ & 1\\
3& $1\times 3$ & 1\\
4&  $2\times 2, 1\times 4$  & 1\\
5& $1\times 5$ & 1\\
6& $3\times 2, 2\times 3, 1\times 6$ & 2\\
7& $ 1\times 7$ & 1\\
8& $ 4\times 2, 2\times 4, 1\times 8$ & 1
\end{tabular}\vskip 8pt\noindent

Let $\Omega_2(n)$ denote the number of prime factors of $n$, where the powers of 2 are counted without multiplicity, and the powers of odd primes are counted with multiplicity.
\begin{theorem}
Consider {\sc dividing}. For all $n\in \N$, $\SG(n)=\Omega_2(n)$.
\end{theorem}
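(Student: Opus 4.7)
The plan is to induct on $n$, exploiting the nim-sum identity that a disjunctive sum of $m$ copies of a heap with nim-value $g$ equals $0$ when $m$ is even and $g$ when $m$ is odd. Hence, from $n$, the option $k\times m$ (with $km=n$ and $m>1$) contributes the nim-value $0$ if $m$ is even, and by induction $\Omega_2(k)$ if $m$ is odd. Observe that $0$ is always reachable whenever $n>1$, since $k=1$, $m=n$ is a legal partition of nim-value $\SG(1)\oplus\cdots\oplus\SG(1)=0$, so the whole problem reduces to identifying which positive nim-values appear among the odd-$m$ options.

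I would next split into cases by the $2$-valuation of $n$. If $n$ is odd, every admissible $m>1$ is odd, so the reachable nim-values are exactly $\{\Omega(k):k\mid n,\,k<n\}$; since a subproduct of any $j<\Omega(n)$ of the prime factors of $n$ is a proper divisor, this set equals $\{0,1,\ldots,\Omega(n)-1\}$ and the mex is $\Omega(n)=\Omega_2(n)$. If $n=2^a$ with $a\ge 1$, every proper divisor $k=2^i$ ($i<a$) forces $m=2^{a-i}$ to be even, so every option has nim-value $0$, and the mex is $1=\Omega_2(2^a)$. Finally, if $n=2^ab$ with $a\ge 1$ and odd $b>1$, then odd $m$ requires $k$ to retain the full factor $2^a$, giving $k=2^a d$ with $d\mid b$ and $d<b$; by induction these options yield the nim-values $\Omega_2(2^a d)=1+\Omega(d)$, which sweep $\{1,\ldots,\Omega_2(n)-1\}$. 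Together with any even-$m$ option (which gives $0$), the reachable set is $\{0,1,\ldots,\Omega_2(n)-1\}$, with mex $\Omega_2(n)$.

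The conceptually delicate point, and the part that deserves the most care, is why $2$ must be counted without multiplicity in $\Omega_2$. The reason is a parity cancellation: whenever the $2$-part of $k$ is strictly smaller than that of $n$, the multiplicity $m=n/k$ is even, and the nim-sum of $m$ identical copies collapses to $0$. So one cannot step the $2$-valuation down one level at a time through an odd-$m$ move; the only way to decrease $\Omega_2$ through an odd-$m$ option is to drop one of the odd prime factors, which decreases $\Omega_2$ by exactly $1$. This is why the target value $\Omega_2(n)$ is missed by every option (odd-$m$ options are bounded above by $\Omega_2(n)-1$ and even-$m$ options give $0$), and once this is pinned down the mex computations in each case are immediate.
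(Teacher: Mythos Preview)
Your proof is correct and follows essentially the same route as the paper's: induction on $n$, the observation that $m$ copies of a heap contribute $0$ or $\SG(k)$ according to the parity of $m$, and a case split on the $2$-part of $n$. If anything, your argument is more complete than the paper's sketch, since you explicitly verify both that all values below $\Omega_2(n)$ are reached and that $\Omega_2(n)$ itself is not---the paper leaves the latter (and the attainment of $0$) implicit.
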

\begin{proof}
$\SG(1)=0$, and $1$ does not have any prime components. Suppose that $n$ is a power of two. Then $\SG(n)=1$, since each option permits the mimic strategy. Similarly, if $n$ is a prime, then $\SG(n)=1$. Suppose that $n=2^kp_1\cdots p_j$, with $k\in \N_0$ and each $p_i$ odd. We use induction to prove that $\SG(n)=j$ if $k=0$, and otherwise $\SG(n)=j+1$. If $k=0$, $n$ can be split into an odd number of components each having  $m$ prime factors for each $m\in [1, j-1]$. Induction and the nim-sum together with the mex-rule gives the result in this case. Similarly, if $k>0$, $n$ can be split into an odd number of components of $m$ prime factors for each $m\in [1, j]$, which proves that $\SG(n)=j+1$ in this case. 
\end{proof}

\begin{example}
Suppose the position is $18+7=2\cdot 3^2+7$. How do you play to win? The nim-value is $3\oplus1=2$, where $\oplus$ denotes the nim-sum. Hence the next player has a good move. The good move turns the 18-component to nim-value 1, that is, we divide it into an odd number of even numbers with no odd factor. This can be done in only one way: you move to $2\times 9 + 7=2+7$, and clearly $\SG(2+7)=0$. The next player has exactly two options, but, either way, you will finish the game in your next move.
\end{example}

\subsection{The dividing and remainder game}\label{sec:divrem}
The ruleset {\sc divide-and-residue} is an extension of {\sc dividing}, where you are allowed to divide $n$ to $k$ equal parts $d$ and a remainder $r$ that is smaller than the parts. Thus, here we have a lot more options (for a generic game) than {\sc dividing}, which is obvious by the representation $n=k\times d+r$, with $0\le r< d$. By moving we are free to choose any $1\le d<n$, so we have $n-1$ options, for all $n>0$. The \SG-sequence starts: 

\vskip 8pt
\begin{tabular}{ccc}
$n$ & $\mbox{opt}(n)$ & $\SG(n)$\\\hline
1& $\varnothing$ & 0\\
2& $1+1$ & 1\\
3& $2+1,1\times 3$ & 2\\
4&  $3+1, 2\times 2, 1\times 4$  & 1\\
5& $4+1,3+2,2\times 2+1,1\times 5$ & 2\\
6& $5+1, 4+2, 3\times 2, 2\times 3, 1\times 6$ & 3\\
7& $6+1, 5+2, 4+3, 3\times 2+1, 2\times 3+1, 1\times 7$ & 2\\
8& $7+1,6+2,5+3, 4\times 2, 3\times 2+2, 2\times 4, 1\times 8$ & 3
\end{tabular}\vskip 8pt\noindent

An even number of heaps of the same sizes reduces to a heap of size one. A heap of size one in a disjunctive sum, gets removed. We get an  equivalent reduced table: \vskip 8pt\noindent

\begin{tabular}{ccc}
$n$ & $\mbox{opt}(n)$ & $\SG(n)$\\\hline
1& $\varnothing$ & 0\\
2& $1$ & 1\\
3& $2, 1$ & 2\\
4&  $3,1$  & 1\\
5& $4,3+2,1$ &2\\
6& $5, 4+2, 2, 1$ & 3\\
7& $6, 5+2, 4+3, 2, 1$ & 2\\
8& $7,6+2,5+3, 2, 1$ & 3\\
9& $8,7+2,6+3, 5+4, 3,1$&4\\
10& $9,8+2,7+3,6+4,3,2,1$&3
\end{tabular}\vskip 8pt

Note that, when we remove pairs of equal numbers, sometimes we must add the option `1' to symbolize a move to a terminal position of nim-value $2+2=0$. From this table we may deduce that the game $7+3$ from the first paragraph in the paper is indeed a losing position. 
{\sc divide-and-residue} has a mysterious \SG-sequence, as depicted in Figure~\ref{fig:divares}.

\begin{figure}[h!]
 \centering{
    \includegraphics[width=1\textwidth]{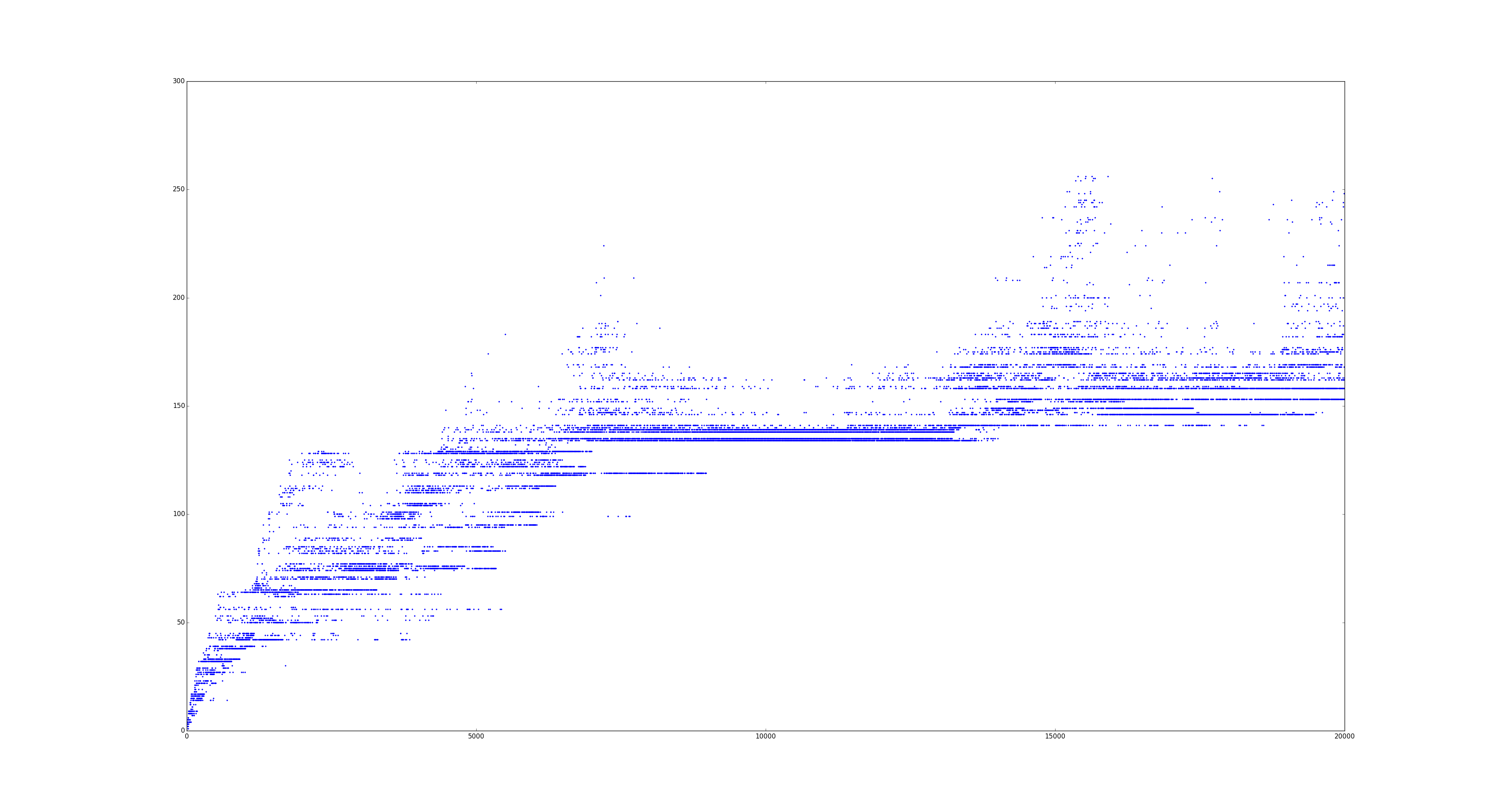}}
    \caption{The initial 20000 nim-values of {\sc divide-and-residue}. They just about touch the nim-value $2^8=256$.}\label{fig:divares}
\end{figure}

Here are the 50 first nim-values, of the form [heap size, nim-value]:

$[1,0],
[2, 1],
[3, 2],
[4, 1],
[5, 2],
[6, 3],
[7, 2],
[8, 3],
[9, 4],
[10, 3],
[11, 4],
[12, 3],
[13, 4],
[14, 3],
[15, 4],
[16, 3],\newline
[17, 4],
[18, 5],
[19, 4],
[20, 5],
[21, 3],
[22, 5],
[23, 4],
[24, 2],
[25, 1],
[26, 5],
[27, 6],
[28, 5],
[29, 6],
[30, 2],
[31, 6],\newline
[32, 5],
[33, 3],
[34, 8],
[35, 9],
[36, 8],
[37, 9],
[38, 8],
[39, 9],
[40, 8],
[41, 9],
[42, 4],
[43, 9],
[44, 4],
[45, 9],
[46, 8],\newline
[47, 9],
[48, 4],
[49, 9],
[50, 4].$

Early nim-values tend to be odd for heaps of even size, and even for those of odd size. By an elementary argument we get: the heap of size 25 is the largest heap of nim-value one, and one can prove the analogous statement for a few more small nim-values. We conjecture that any fixed nim-value occurs finitely many times. 

For the upper bound, the nim-values seem to be bounded by $n^{3/5}$, for sufficiently large heap sizes $n$. An empirical observation is that the growth of nim-values appears to be halted at powers of two. For example, the nim-value $2^2$ starts to appear at heap size $9$, but does not increase beyond $2^2+2^0$ until the heap of size 27. 

\begin{conjecture}\label{con:1}
Consider {\sc divide-and-residue}. Then each nim-value occurs, and at most a finite number of times. Moreover $\SG(n)/n\rightarrow 0$, as $n\rightarrow \infty$. 
\end{conjecture}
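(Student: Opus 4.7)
The plan begins with a reduction of the option set. From $n$, each choice of $d\in\{1,\ldots,n-1\}$ yields $n=qd+r$ with $q\ge 1$ and $0\le r<d$; after cancelling pairs of equal heaps, the option reduces to one of four forms: the empty position when $q$ is even and $r=0$; the single heap $r$ when $q$ is even and $r>0$; the single heap $d$ when $q$ is odd and $r=0$; and the two-heap disjunctive sum $d+r$ when $q$ is odd and $r>0$. Writing $\mathcal{V}(n)$ for the resulting set of option Grundy values, we have $\SG(n)=\mathrm{mex}\,\mathcal{V}(n)$, and the rest of the argument studies how $\mathcal{V}(n)$ behaves as $n$ varies.

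For the first two assertions --- each nim-value is attained, and only finitely often --- the plan is to prove the stronger statement $\SG(n)\to\infty$. Finiteness of each preimage is then immediate, and surjectivity is automatic as well, since whenever $\SG(n)=M+1$ the mex definition forces each of $0,1,\ldots,M$ to appear as the Grundy value of some option, hence to be attained at a strictly smaller argument. To prove $\SG(n)\to\infty$, fix $M$ and show $\{0,1,\ldots,M\}\subseteq\mathcal{V}(n)$ for all sufficiently large $n$. If $m_v$ is any witness with $\SG(m_v)=v$, then $v$ appears in $\mathcal{V}(n)$ as soon as one can find $d>m_v$ with $d\mid n-m_v$ and $(n-m_v)/d$ even, for then the corresponding decomposition has $q$ even, $r=m_v$, and the reduced option has Grundy value $v$. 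Elementary divisibility guarantees such a $d$ whenever $n$ is large compared with the witnesses $m_v$ for $v\le M$, and the claim follows.

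The main obstacle is the sublinearity claim $\SG(n)/n\to 0$. A naive induction with $\SG(n)\le Cn^\alpha$ for some $\alpha<1$ already fails in the $q=1$ regime: for $d$ close to $n$, the option value $\SG(d)\oplus\SG(n-d)$ is bounded only by $\SG(d)+\SG(n-d)$, which under the inductive hypothesis is at best roughly $2Cn^\alpha$, insufficient to close the induction. My plan is instead to bound $\mathrm{mex}\,\mathcal{V}(n)$ directly by exhibiting a small value $v(n)=o(n)$ that is \emph{missed} by every option of $n$. The empirical observation that nim-values appear to stall at powers of two and remain below $n^{3/5}$ suggests such a missing value should be sought near this scale. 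Concretely, this requires analysing the distribution of the sums $\SG(d)\oplus\SG(r)$ as $(d,q,r)$ ranges over all decompositions $n=qd+r$, and locating a forbidden value --- essentially an equidistribution-type statement for $\SG$ on short intervals and on arithmetic progressions modulo small $d$. I expect this to be the deepest step and to require genuinely new ideas not present elsewhere in the paper.
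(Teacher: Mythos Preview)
This statement is a \emph{conjecture} in the paper; no proof is offered there. The only related result actually established is the sublinearity $\SG(n)/n\to 0$ for the restricted game {\sc complement-grundy} (where the quotient is forced to satisfy $q\ge 2$), via a short argument that the first occurrence of nim-value $2^{k+1}$ must lie at a heap of size at least three times that of the first occurrence of $2^k$. That argument does not transfer to {\sc divide-and-residue}, because the $q=1$ options let a heap of size $n$ reach positions involving heaps of size up to $n-1$, so there is no factor-of-three delay. All three parts of the conjecture remain open in the paper.

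Your surjectivity step has a genuine gap. When $\SG(n)=M+1$, the mex condition guarantees that each $v\le M$ is the Grundy value of some \emph{option}, but an option here is in general a two-heap sum $d+r$, and $\SG(d)\oplus\SG(r)=v$ does not force any single heap to satisfy $\SG(m)=v$. So ``attained at a strictly smaller argument'' does not follow. The paper itself observes that in the closely related {\sc complement-grundy} game certain nim-values (e.g.\ $12$, $15$, $20$) never appear as $\SG(m)$ for any single heap $m$; surjectivity is therefore not automatic from the ruleset structure, and your plan supplies no mechanism to rule out such gaps here.

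Your argument for $\SG(n)\to\infty$ also breaks. The construction ``find $d>m_v$ with $d\mid n-m_v$ and $(n-m_v)/d$ even'' requires $n-m_v$ to be even, so it only reaches witnesses $m_v$ of the same parity as $n$; indeed, with even $q$ the reduced single-heap option is the remainder $r=n-qd$, and $r\equiv n\pmod 2$ always. Since, for instance, the only heap of Grundy value $0$ is $m=1$, and in general there is no guarantee that a given value $v$ has single-heap witnesses of both parities, the ``elementary divisibility'' step does not go through as stated. You correctly flag the sublinearity claim as the deepest part and do not claim a proof; that assessment matches the paper, which leaves it open as well.
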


There is no big surprise that this game is hard, since it is an extension of {\sc grundy's game} \cite{WW, Sloane}. 
Indeed, the options of {\sc divide-and-residue} in which the divisor $d$ is greater than $n/2$ correspond to the rule of splitting a heap into two unequal parts of {\sc grundy's game}. If we define the ruleset {\sc complement-grundy}, by requiring that $k\ge 2$ in {\sc divide-and-residue}, then we can prove the second statement in Conjecture~\ref{con:1} for this new game. 
Let us tabular the first few nim-values, where options are displayed in reduced form:

\begin{tabular}{ccc}
$n$ & $\mbox{opt}(n)$ & $\SG(n)$\\\hline
1& $\varnothing$ & 0\\
2& $1$ & 1\\
3& $1$ & 1\\
4&  $1$  & 1\\
5& $1$ &1\\
6& $2, 1$ & 2\\
7& $2, 1$ & 2\\
8& $2, 1$ & 2\\
9& $3, 2,1$&2\\
10& $3,2,1$&2\\
11& $3,3+2,2,1$&2\\
\end{tabular}\vskip 8pt

Figure~\ref{fig:compgru} shows that the initial regularity of nim-values is replaced by more complexity further down the road, although not as severely as for {\sc divide-and-residue}. Note that the two games appear to share some geometric properties such as a local stop of nim-value growth at powers of two, and a bounded number of occurrences for each nim-value. In this case though, some nim-values do not appear, such as $12, 15, 20$ etc. We do not yet know if  the omitted nim-values can be described by some succinct formula, and we do not even know if the occurrence of each nim-value is finite.
\begin{figure}[ht!]
 \centering{
    \includegraphics[width=1\textwidth]{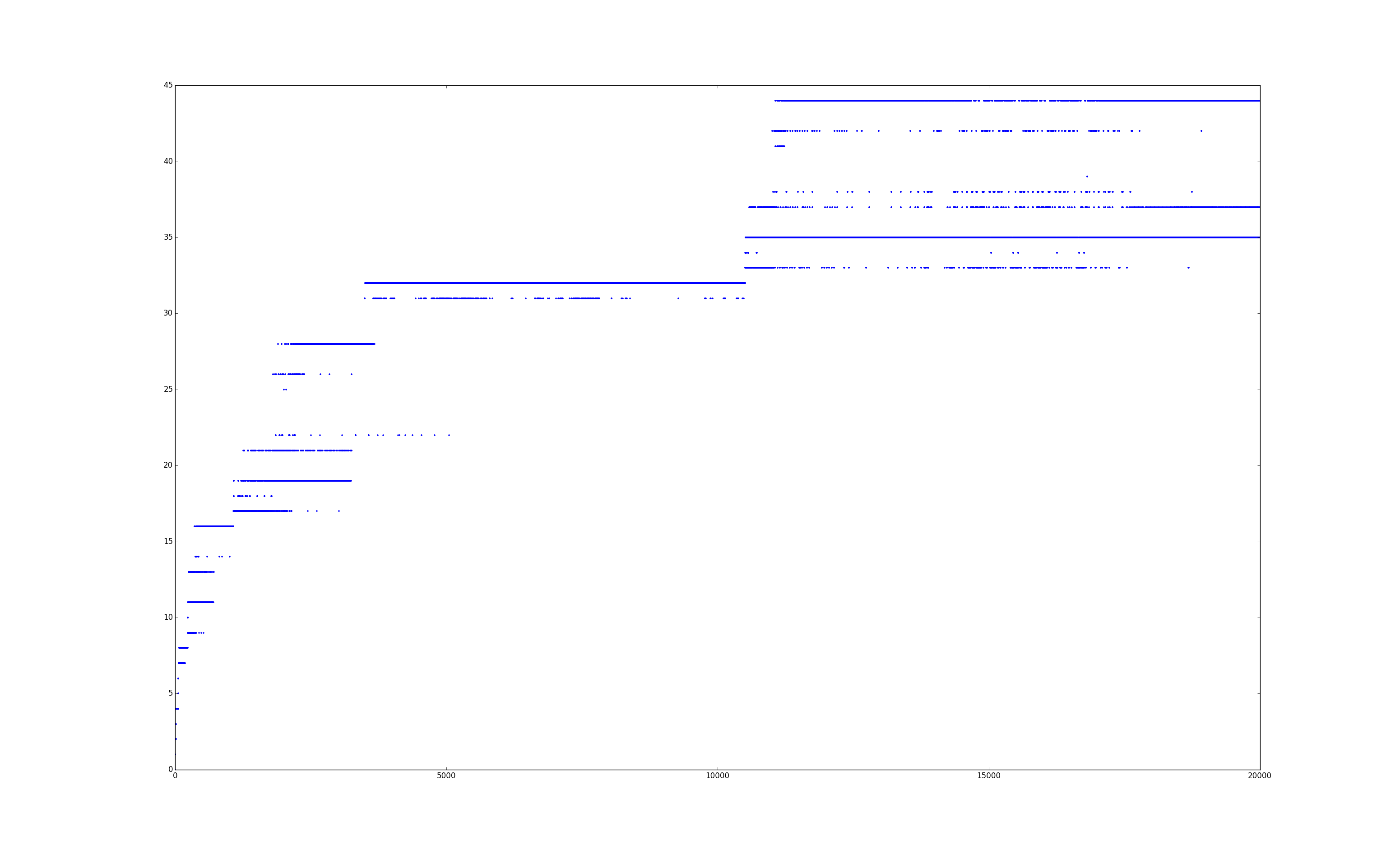}}
    \caption{The initial $20000$ nim-values of {\sc complement-grundy}.}\label{fig:compgru}
\end{figure}

\begin{theorem}
Consider {\sc complement-grundy}.  Then $\SG(n)/n\rightarrow 0$, as $n\rightarrow \infty$. 
\end{theorem}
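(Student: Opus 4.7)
The plan is to prove this by strong induction on $n$, exploiting the structural fact that in {\sc complement-grundy} every component of every option is bounded in size by $n/2$. First, I would fix the parametrization: for each part-size $d\in\{1,\dots,\lfloor n/2\rfloor\}$ there is a unique option with $k=\lfloor n/d\rfloor\ge 2$ copies of $d$ plus remainder $r=n\bmod d<d$. After cancellation of equal pairs, this option reduces to at most two heaps and its nim-value can be written uniformly as $v_d=\SG(d^{\ast})\oplus\SG(r)$, where $d^{\ast}=d$ if $k$ is odd and $d^{\ast}=0$ if $k$ is even, with the convention $\SG(0)=0$. Thus every option value is a XOR of two $\SG$-values at arguments in $\{0,1,\dots,\lfloor n/2\rfloor\}$, and setting $M(n):=\max_{m\le n}\SG(m)$ one immediately obtains the direct recursion $M(n)\le 2M(n/2)+1$.

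Unfortunately this maximum-based recursion only yields $M(n)=O(n)$, which is insufficient for the stated asymptotic. Instead, the strategy is to bound $\SG(n)$ by the number of \emph{distinct} option values, using that $\SG(n)=\mbox{mex}\{v_d\}\le|\{v_d\}|$. I would partition divisors by the quotient $k$: for each fixed $k$ the pairs $(d,r)$ with $kd+r=n$ trace an arithmetic progression $r=n-kd$ of length $\Theta(n/k^2)$. Fixing a target $\epsilon>0$, choose a threshold $K=K(\epsilon)$. In the large-quotient regime $k\ge K$ one has $d\le n/K$, so by induction $\SG(d),\SG(r)\le M(n/K)$, and the total number of such options is $O(n/K)$, contributing $O(n/K)$ distinct values, each of magnitude $\le 2M(n/K)$. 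In the small-quotient regime $2\le k<K$ there are only finitely many values of $k$, and for each, the distinct values among the $v_d$ form a subset of $\{\SG(a)\oplus\SG(b):ka+b=n,\,0\le b<a\le n/k\}$; one aims to show, by induction, that this set has size $o(n/k)$.

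Summing the contributions with $K(\epsilon)$ tending to infinity slowly enough, the number of distinct option values is $o(n)$, whence $\SG(n)\le|\{v_d\}|=o(n)$, closing the induction. The main obstacle, and the real technical content, is establishing that the XOR family $\{\SG(a)\oplus\SG(b):ka+b=n\}$ produces $o(n/k)$ distinct values for each small $k$: the crude upper bound $(M(n/k)+1)^2$ is quadratic in $M$ and, if used naively, would double the exponent in any polynomial inductive bound rather than decreasing it. The required sublinear count must genuinely exploit the rigid arithmetic coupling $r=n-kd$ together with inductive control of $\SG$-values along arithmetic progressions, rather than treating the two XOR arguments as ranging independently. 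Verifying this finer pigeonholing along arithmetic progressions is where the bulk of the work will lie, and it is precisely the reason that the analogous statement for {\sc divide-and-residue}, where the component sizes are no longer forced to be $\le n/2$, is only conjectured.
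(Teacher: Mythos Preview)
Your proposal abandons the maximum-based recursion too early, because the bound you feed into it is not sharp. You write that after cancellation every surviving component has size at most $n/2$, but the correct bound is $n/3$. When the quotient $k$ is even, the copies of $d$ cancel in pairs and only the remainder $r$ survives; since $n=kd+r>kr+r\ge 3r$ this gives $r<n/3$. When $k$ is odd, the constraint $k\ge 2$ forces $k\ge 3$, so $d\le n/3$ and $r<d\le n/3$. Hence every option value is a XOR of $\SG$-values taken at arguments $\le \lfloor n/3\rfloor$, and your recursion sharpens to $M(n)\le 2M(\lfloor n/3\rfloor)+1$, which already yields $M(n)=O\bigl(n^{\log_3 2}\bigr)$ and hence $\SG(n)/n\to 0$ with no further work. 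The counting of distinct option values is therefore unnecessary, and the pigeonholing step you single out as ``where the bulk of the work will lie'' is both unproven in your sketch and not needed.

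The paper's proof is exactly this observation, phrased via first-occurrence heap sizes. Writing $n_j$ for the first heap at which the nim-value $2^j$ appears, any heap $m$ with $\SG(m)>2^j$ must, by the mex rule, have an option of value $2^j$, hence a surviving component of $\SG$-value $\ge 2^j$, hence of size $\ge n_j$; since surviving components are $\le m/3$, this forces $m\ge 3n_j$. Thus $n_{j+1}\ge 3n_j$, and $\SG(m)/m$ is bounded by a constant times $(2/3)^j$ on the range $n_j\le m<n_{j+1}$.
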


\begin{proof}
Consider the nim-value $2^k$. If it does not appear, we are done. Suppose it appears for the first time at heap size $n_k$. By the mex rule, if a nim-value is greater than $2^k$, it must have nim-value $2^k$ in its set of options. By the rules of game, this can only happen for a heap of size $m\ge 3\cdot n_k$. In particular, this holds for the nim-value $2^{k+1}$, which occurs for the first time at $m=n_{k+1}$, say. Thus, for nim-values that are powers of two, we get $\frac{2}{3}\SG(n_k)/n_k\ge \SG(n_{k+1})/n_{k+1}$. This upper bound holds for arbitrary nim-values, since the lower bound on where the power of two $2^{k+1}$ can appear is the same lower bound where any other nim-value greater than $2^k$ may appear.
\end{proof}

Two simpler variations of {\sc  divide-and-residue} are:\\

\noindent 1) The remainder is not included in the disjunctive sum of an option: {\sc divide-throw-residue}.\\ 

\noindent 2) Only the remainders are the options: {\sc residue-throw-divisor}.\\

\begin{figure}[hb!]
  \centering
\includegraphics[width=.50\textwidth]{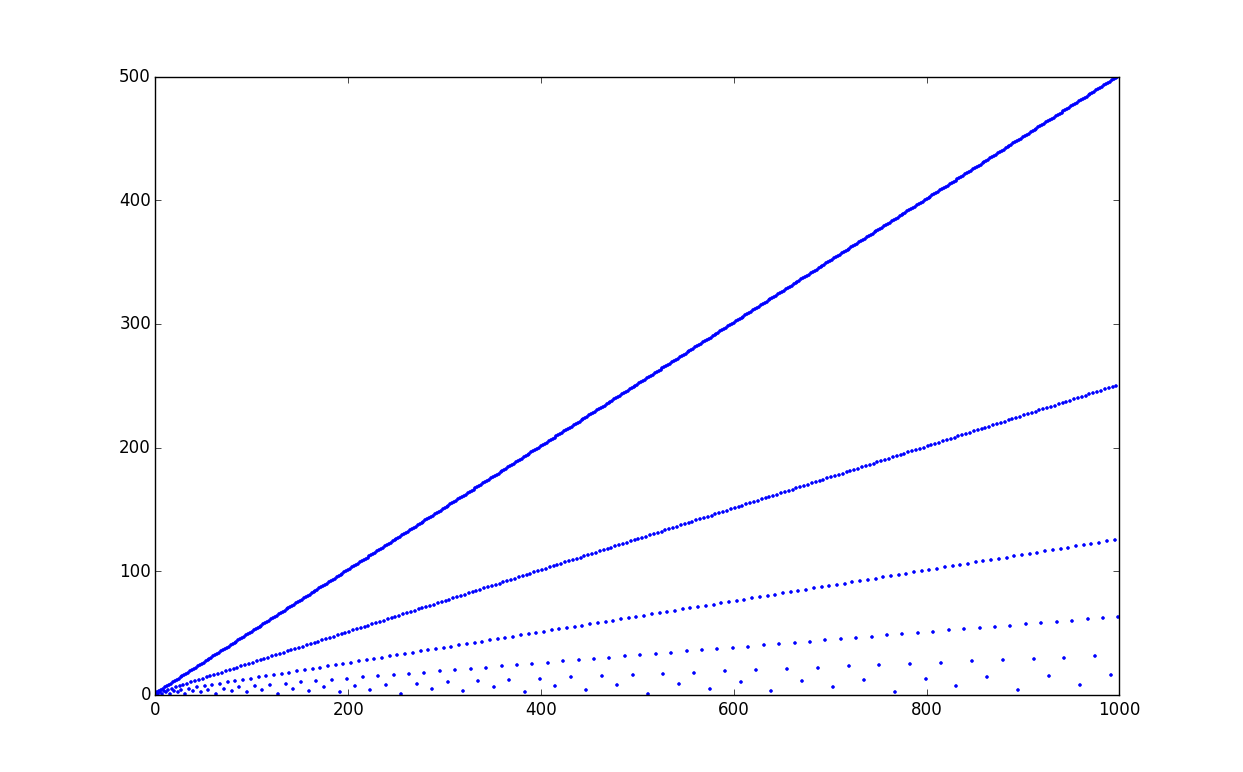}
    \includegraphics[width=.42\textwidth]{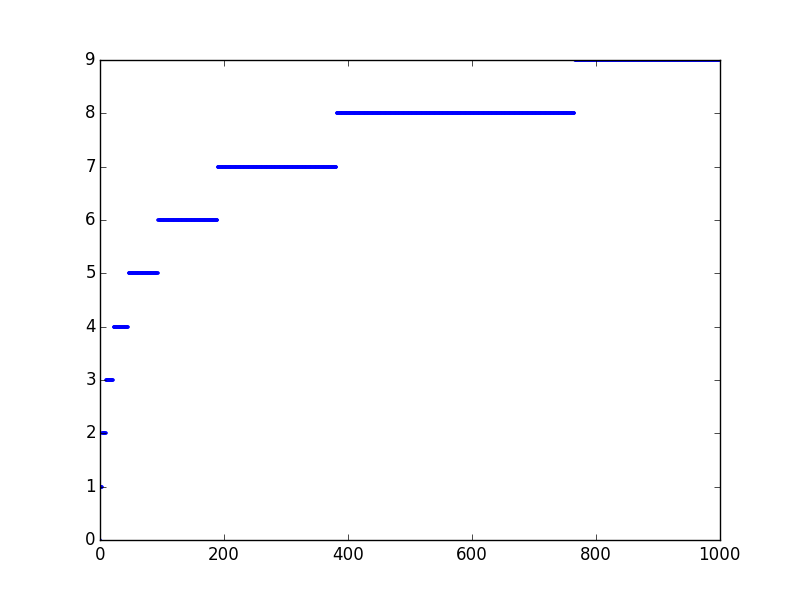}
\caption{The initial nim-values of {\sc divide-throw-residue} and {\sc residue-throw-divisor}, respectively.}\label{fig:divnoresresnodiv}
\end{figure}

The patterns of the nim-values of these rulesets are displayed in Figure~\ref{fig:divnoresresnodiv}. For variation~1 (to the left), we prove that, for heaps $>1$, the \SG-sequence coincides with OEIS, A003602: If $n = 2^m(2k-1)$, for some $m\ge 0$, then $a(n) = k$. The Sprague-Grundy sequence starts at heap of size one with nim-values as follows $$0,1,2,1,3,2,4,1,5,3,6,2,7,4,8,1,9,5,10,3,11,6,12,\ldots$$ 

It turns out the {\sc divide-throw-residue} has the same solution as {\sc maliquant}, the game where the options are the non-divisor singletons; recall  Theorem~\ref{thm:maliquant}, where this result is expressed as an index function, $i_o$, the index of the largest odd divisor.

\begin{theorem}
Consider {\sc divide-throw-residue}. Then $\SG(n)=i_o(n)=k$, if $n= 2^m(2k-1)$, for some integer $m\ge 0$.
\end{theorem}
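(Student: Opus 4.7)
The plan is strong induction on $n$. I would treat $n = 1$ (where $\SG(1)=0$, the single exception to the formula) and $n = 2$ (where $m=1,k=1$ and the only option $1+1$ has nim-value $0$, so $\SG(2)=1$) as base cases, and assume for $n \ge 3$ the statement for all smaller arguments. First I would spell out the structure of the options: from $n$ and each $d \in \{1,\ldots,n-1\}$, the option is the disjunctive sum of $\lfloor n/d\rfloor$ heaps of size $d$, whose nim-value equals $\SG(d)$ if $\lfloor n/d\rfloor$ is odd and $0$ if $\lfloor n/d\rfloor$ is even (using the mimicking reduction on pairs of equal heaps already exploited elsewhere in the paper). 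Writing $n = 2^m(2k-1)$, the task reduces to showing that every value in $\{0,1,\ldots,k-1\}$ occurs among option nim-values while $k$ does not.

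For the non-occurrence of $k$: suppose some option had nim-value $k \ge 1$. Then $\lfloor n/d\rfloor$ must be odd and $\SG(d) = k$, so by induction $i_o(d)=k$, forcing $d = 2^{\ell}(2k-1)$ for some $\ell \ge 0$. Unique factorization of $n$ combined with $d < n$ forces $\ell < m$, whence $\lfloor n/d\rfloor = 2^{m-\ell}$ is a positive power of two exceeding $1$ and hence even, contradicting the odd assumption.

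For the occurrence of every $j \in \{0,\ldots,k-1\}$: the value $0$ is attained by $d=1$, since either $n$ is even (making the multiplicity even) or $n$ is odd and the option is a disjunctive sum of $n$ singletons of nim-value $0$, whose XOR is still $0$. For $1 \le j \le k-1$, I set $b = 2j-1$ and take $\ell^\ast$ to be the largest nonnegative integer with $2^{\ell} b \le n$. Unique factorization prevents $2^{\ell^\ast} b = n$ because $b \ne 2k-1$, so $d := 2^{\ell^\ast} b < n$; maximality of $\ell^\ast$ gives $n < 2^{\ell^\ast + 1} b = 2d$, so $\lfloor n/d\rfloor = 1$ is odd, and the option's nim-value is $\SG(d) = i_o(d) = j$ by induction.

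The main obstacle is bookkeeping rather than conceptual: one must carefully verify that the ``greedy'' choice $d = 2^{\ell^\ast}(2j-1)$ falls strictly inside $\{1,\ldots,n-1\}$, which is exactly what the hypothesis $j < k$ rules in through unique factorization of $n$. Once that is in place, the crisp identity $\lfloor n/d\rfloor = 2^{m-\ell}$ whenever $d = 2^{\ell}(2k-1)$ drives both halves of the \textrm{mex} calculation with no residual computation. Incidentally, the argument parallels the proof of Theorem~\ref{thm:maliquant}, reflecting the fact that both games share the solution function $i_o$.
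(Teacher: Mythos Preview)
Your argument is correct and complete. The only minor quibble is the closing sentence: the identity $\lfloor n/d\rfloor = 2^{m-\ell}$ is really only used in the non-occurrence half, not in ``both halves'' (the occurrence half instead uses $\lfloor n/d\rfloor = 1$ for the greedy choice of $d$); but this is cosmetic.

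Your route differs from the paper's. The paper does not recompute the mex from scratch; instead it argues that the set of option nim-values of {\sc divide-throw-residue} at $n$ coincides, up to irrelevant duplicates, with that of {\sc maliquant} at $n$, and then invokes Theorem~\ref{thm:maliquant}. Concretely, the paper observes that for $d$ in the top half $[\lfloor n/2\rfloor+1,\,n-1]$ the option is a single heap of size $d$, exactly as in {\sc maliquant}; options with an even number of copies contribute only nim-value $0$; and an option with an odd number of copies of $m$ has nim-value $\SG(m)$, which already appears among the top-half options via $2^s m$ for suitable $s$ (since $i_o$ is invariant under multiplication by $2$). Thus both games have the same mex, and Lemma~\ref{lem:oddind} (via the {\sc maliquant} proof) does the work.

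Your approach is more self-contained: you never appeal to the {\sc maliquant} theorem or to Lemma~\ref{lem:oddind}, and instead exhibit an explicit witness $d=2^{\ell^\ast}(2j-1)$ for each target value $j<k$ and a clean parity obstruction for $j=k$. This buys independence from the earlier result and is arguably crisper for the exclusion step. The paper's approach, on the other hand, explains \emph{why} the two rulesets share the solution function $i_o$: their option nim-value sets are essentially the same, not merely their mexes.
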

\begin{proof}
Observe that the options in the interval $[\lfloor n/2\rfloor+1, n-1]$ are the same as for {\sc maliquant}. Assume first $n$ is even. Then $n/2+n/2$ is an option in {\sc divide-throw-residue}, but $n/2$ is not an option in {\sc maliquant}. However, $n/2+n/2$ only contributes the nim-value $0$ and may be ignored. Consider next the disjunctive sum $m+\cdots +m$, with an odd number of components adding up to $n$. Then there is a power of 2, say $2^k$  such that $2^km\in [\lfloor n/2\rfloor+1, n-1]$, i.e., $2^km\nmid n$. And so, by induction, $\SG(m) = \SG^{\rm M}(2^km)$, where the M indicates {\sc maliquant}. On the other hand, there are options of {\sc maliquant} of the form $m\nmid n,  m<n/2$. They do not have a match in  {\sc divide-throw-residue}. But, as we saw in the proof of Theorem~\ref{thm:maliquant}, they do not contribute to the nim-value computation in {\sc maliquant}. The case of odd $n$ is similar.
\end{proof}

For variation~2, we observe the following \SG-sequence: $0,1,1,1,2,2,2,2,2,2,\ldots$, i.e., for $n>0$ if $3\cdot 2^k$ copies of $k+1$ have appeared append $3\cdot 2^{k+1}$ copies of $k+2$ as the next nim-values. 

\begin{theorem}
Consider {\sc residue-throw-divisor}. For all $n\in \N$, $\SG(n)=k$ if $$n\in \{3(2^{k-1}-1)+2,\ldots , 3(2^k-1)+1 \}.$$
\end{theorem}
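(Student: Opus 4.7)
My plan is to reduce the problem to a very simple description of $\opt(n)$ and then run an induction. The first step is to prove that for all $n\ge 2$,
$$\opt(n)=\{0,1,2,\dots,\lfloor (n-1)/2\rfloor\}.$$
For ``$\supseteq$'', the residue $r=0$ comes from $k=1$, and for $1\le r\le \lfloor(n-1)/2\rfloor$ the choice $k=n-r$ satisfies $k>n/2>r$ and $k\le n-1$, so $n=1\cdot k+r$ is an admissible Euclidian division producing remainder $r$. For ``$\subseteq$'', any $r=n\bmod k$ with $1\le k\le n-1$ satisfies $r<k$; if $k>n/2$ then forcibly $m=1$ and $r=n-k<n/2$, while if $k\le n/2$ then $r<k\le n/2$. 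Thus every option $r$ satisfies $r<n/2$, i.e.\ $r\le\lfloor(n-1)/2\rfloor$.

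Write $a_k=3\cdot 2^{k-1}-1$ and $b_k=3\cdot 2^k-2$, so the theorem asserts $\SG(n)=k$ for $n\in[a_k,b_k]$. Note that $b_k+1=a_{k+1}$, so these intervals partition $\{2,3,\dots\}$. A short calculation yields the two identities
$$\left\lfloor\tfrac{a_k-1}{2}\right\rfloor=a_{k-1}\qquad\text{and}\qquad\left\lfloor\tfrac{b_k-1}{2}\right\rfloor=b_{k-1}\qquad(k\ge 2),$$
which encode the doubling structure of the intervals and drive the induction.

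Now I would run a strong induction on $n$. The base cases $n\in\{0,1\}$ are terminal with $\SG=0$, and $n\in\{2,3,4\}=[a_1,b_1]$ satisfy $\opt(n)\subseteq\{0,1\}$, giving $\mbox{mex}\{0\}=1$. For the inductive step, let $n\in[a_k,b_k]$ with $k\ge 2$. Since $m\mapsto\lfloor(m-1)/2\rfloor$ is non-decreasing, the two identities above imply $\lfloor(n-1)/2\rfloor\in[a_{k-1},b_{k-1}]$. By the first step, $\opt(n)=\{0,1,\dots,\lfloor(n-1)/2\rfloor\}$, and in particular $\{0,1,\dots,a_{k-1}\}\subseteq\opt(n)\subseteq\{0,1,\dots,b_{k-1}\}$. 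By the induction hypothesis, every element of $\opt(n)$ has nim-value in $\{0,1,\dots,k-1\}$, and since $\{0,1,\dots,a_{k-1}\}$ already meets each interval $[a_j,b_j]$ for $0\le j\le k-1$ (using the convention $[a_0,b_0]=\{0,1\}$), every value in $\{0,1,\dots,k-1\}$ is realized. Therefore $\SG(n)=\mbox{mex}\{0,1,\dots,k-1\}=k$.

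The only real obstacle is the explicit description of $\opt(n)$ via the Euclidian-division argument; once that is in hand, the two arithmetic identities for $a_k$ and $b_k$ make the induction essentially automatic.
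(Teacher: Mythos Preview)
Your proof is correct. The paper actually leaves this proof to the reader, so there is nothing to compare against; your argument is exactly the kind of filling-in that was intended, and the key reduction $\opt(n)=\{0,1,\dots,\lfloor(n-1)/2\rfloor\}$ together with the doubling identities for $a_k,b_k$ is the natural route.
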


\begin{proof}
We leave this proof to the reader.
\end{proof}

\section{The factoring games}\label{sec:factoring}
Is there any game that has the number of prime factors of $n\in \mathbb{N}$ as the sequence of nim-values? The answer is yes, as given by the almost trivial aliquot game in Section~\ref{sec:maliquot}. There is a related game that decomposes into several components in play, namely to play to any factorization of $n$.

\begin{example}[{\sc m-factoring}]\label{move-to factoring}
Let $n=12$. Then the set of options is $\{6 + 2, 3 + 4, 2+ 2+ 3\}$. The unique winning move is to $2+ 2+ 3$, because the nim-values in the set of options are $1,1,0$, respectively. Hence $\SG(12) = 2$.
\end{example}

\begin{example}[{\sc s-factoring}]\label{subtract factoring}
Let $n=12$. Then the set of options is $\{6 + 10, 9 + 8, 10+ 10+ 9\}$. The \SG-sequence starts:  $0,0,1,1,1,1,1,1,2,1,1,1,2,1,1,1,1,1,2,1,2,1,1,1,1,1$, where the first heap is the empty heap and the second 0 is due to that 1 does not have any prime factors. $\SG(12)=2$.  
\end{example}

{\sc m-factoring} has a simple solution, but {\sc s-factoring} we do not yet understand. Recall the omega-functions from Section~\ref{sec:omega}. 
\begin{theorem} Consider {\sc m-factoring}, and let $n\geqslant 2$, where each option is a non-trivial disjunctive sum of a factoring of $n$. Then $\SG(n)=\Omega(n)-1$. If no two distinct components may contain the same prime number, then $\SG(n)=\omega(n)-1$.
\end{theorem}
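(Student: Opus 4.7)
The plan is to prove both statements by strong induction on $n$, showing that the option set realizes exactly the nim-values $\{0,1,\ldots,\Omega(n)-2\}$ (respectively $\{0,1,\ldots,\omega(n)-2\}$), so that the mex rule yields the claim.

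First I would settle the base cases. A prime $n=p$ admits no factoring into at least two non-trivial components, so $\SG(p)=0=\Omega(p)-1$; for the $\omega$-variant, a prime power $n=p^e$ admits no factoring into components with disjoint prime supports, and again $\SG(p^e)=0=\omega(p^e)-1$.

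For the inductive step on {\sc m-factoring}, let $\Omega(n)=k\geq 2$ and consider any option $a_1+a_2+\cdots+a_j$ with $j\geq 2$, each $a_i>1$, and $a_1a_2\cdots a_j=n$. By the induction hypothesis $\SG(a_i)=\Omega(a_i)-1\geq 0$, while $\sum_i\Omega(a_i)=k$. The elementary inequality $\bigoplus_i x_i\leq \sum_i x_i$ for nonnegative integers (immediate from the bit-by-bit comparison of XOR with ordinary addition) gives
\[
\bigoplus_{i=1}^{j}(\Omega(a_i)-1)\;\leq\; \sum_{i=1}^{j}(\Omega(a_i)-1)\;=\;k-j\;\leq\; k-2,
\]
so no option has nim-value $\geq k-1$. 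To realize every $v\in\{0,1,\ldots,k-2\}$, I fix any prime factorization $n=q_1q_2\cdots q_k$ (with repetition) and set $a_1=q_1q_2\cdots q_{v+1}$ and $a_i=q_{v+i}$ for $2\leq i\leq k-v$; this is a valid option since $j=k-v\geq 2$, and by induction its nim-value is $v\oplus 0\oplus\cdots\oplus 0=v$. Combining the two directions, $\SG(n)=\mathrm{mex}\{0,1,\ldots,k-2\}=k-1=\Omega(n)-1$.

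For the $\omega$-variant the same scheme applies verbatim with $\Omega$ replaced by $\omega$: an option corresponds to a partition $S_1\sqcup\cdots\sqcup S_j$ of the $\omega(n)$ distinct prime divisors of $n$, where $a_i$ collects the entire prime-power of $n$ for each prime in $S_i$, so that $\sum_i\omega(a_i)=\omega(n)$; the same ``lump $v+1$ primes into one component and leave each remaining prime as a stand-alone prime-power component'' construction realizes every value in $\{0,\ldots,\omega(n)-2\}$. There is no serious obstacle beyond adjusting the base case (prime powers rather than primes) and noting that a single prime-power component $p^e$ has $\SG=0$ in the $\omega$-game, which is exactly what the construction requires.
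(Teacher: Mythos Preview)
Your proposal is correct and takes essentially the same approach as the paper's own proof: both use strong induction, the XOR-vs-sum inequality $\bigoplus_i x_i\leq\sum_i x_i$ to bound option values above by $k-2$, and the ``lump $v+1$ prime factors into one component, leave the rest as singleton primes'' construction to realize each $v\in\{0,\ldots,k-2\}$. Your write-up is in fact slightly tidier than the paper's (you include the value $v=0$ explicitly and get the indexing right), and your interpretation of the $\omega$-variant---options correspond to set partitions of the distinct primes, each block carrying the full prime-power---is exactly what is needed.
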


\begin{proof}
If $n$ is a prime, then $\SG(n)=0$, because no factoring to smaller components is possible. If $n$ is composite with $k$ prime factors, then, by induction, it is possible to play to an option of nim-value $\ell$, for each $\ell\in \{1,\ldots , k-2\}$, by factoring $n$ into one number with $\ell$ prime factors, and $k-\ell$ other prime components. On the other hand, the nim-value $k-1$ cannot be obtained as a move option, since $(x_1-1)\oplus \cdots \oplus (x_\ell-1)\leqslant  (x_1-1) + \cdots + (x_\ell-1)\leqslant k-2$, if $k=x_1+\cdots +x_\ell$. The proof of the second part is similar.
\end{proof}

\section{Full set games}\label{sec:fullset}
In {\sc fullset maliquot} a player moves to all the proper divisors in a disjunctive sum.\footnote{Obviously we need to exclude the divisor $n\mid n$; the word ``proper" is implicit in the naming.}  Let us display the first few numbers with their options and nim-values.

\vskip 8pt
\begin{tabular}{ccc}
$n$ & $\mbox{opt}(n)$ & $\SG(n)$\\\hline
1& $\varnothing$ & 0\\
2& $1$ & $1$\\
3& $1$ & $1$\\
4& $1+2$ & $0$\\
5& $1$ & $1$\\
6& $1+2+3$ & $1$\\
7& $1$ & $1$\\
8& $1+2+4$ & $0$\\
9& $1+3$ & $0$
\end{tabular}\vskip 8pt\noindent

The nim-value sequence starts $0, 1, 1, 0, 1, 1, 1, 0, 0, 1, 1, 0, 1, 1, 1, 0, 1, 0, 1, 0, 1, 0, 1, 0, \ldots $. The non-unit proper divisors of $24$ are $2,3,4,6,8$ and $12$. The only square-free ones are $2,3$ and $6$, an odd number. Such observations are relevant for the proof of the location of the 0s.

\begin{theorem}
Consider {\sc fullset maliquot}. Then $\SG(n)\in\{0,1\}$, and $\SG(n)=1$ if and only if  $n>1$ is square-free.
\end{theorem}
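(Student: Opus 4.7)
The plan is to observe that {\sc fullset maliquot} has a unique option from every $n>1$, namely the disjunctive sum $D(n)=\sum_{d\mid n,\,1\le d<n} d$, and no option from $n=1$. Since the only move leads to a single game value, the mex rule gives $\SG(n)=1$ if $\SG(D(n))=0$ and $\SG(n)=0$ otherwise; in particular $\SG(n)\in\{0,1\}$ automatically. Because $\SG$ on a disjunctive sum is the nim-sum (XOR) of the components, and all components take values in $\{0,1\}$, we have
\[
\SG(D(n)) \;=\; \bigoplus_{d\mid n,\,1\le d<n}\SG(d) \;=\; \#\{d\mid n\,:\,1\le d<n,\;\SG(d)=1\}\pmod 2.
\]
Thus the theorem reduces to a purely combinatorial parity claim about divisors.

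I would proceed by strong induction on $n$. The base case $\SG(1)=0$ is immediate. For the induction step, assume the theorem holds for all $m<n$; then among the proper divisors $d$ of $n$, the ones with $\SG(d)=1$ are exactly the square-free divisors $d$ of $n$ with $d>1$. So I need to compute the parity of
\[
N(n) \;=\; \#\{d\mid n\,:\,d>1,\;d<n,\;d\text{ square-free}\}.
\]
Letting $k=\omega(n)$ be the number of distinct prime divisors of $n$, the square-free divisors of $n$ correspond bijectively to subsets of the set of prime divisors, giving $2^k$ square-free divisors in total.

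The two cases are now straightforward. If $n$ is square-free (and $n>1$, so $k\ge 1$), then $n$ itself is counted among the $2^k$ square-free divisors, so removing both $d=1$ and $d=n$ gives $N(n)=2^k-2$, which is even; hence $\SG(D(n))=0$ and $\SG(n)=1$. If $n$ is not square-free, then $n$ is not itself square-free, so only $d=1$ must be removed, giving $N(n)=2^k-1$, which is odd; hence $\SG(D(n))=1$ and $\SG(n)=0$. Both conclusions match the claim, completing the induction.

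There is no real obstacle here: the argument is essentially bookkeeping once one recognizes that the single-option structure forces binary values and that the XOR reduces to a parity count of square-free divisors. The only subtle point is to remember to exclude $d=1$ (since $\SG(1)=0$) and to correctly handle whether $d=n$ is or is not among the square-free divisors of $n$, which is precisely what distinguishes the two cases.
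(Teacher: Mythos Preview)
Your proof is correct and follows essentially the same approach as the paper: strong induction reducing the question to the parity of the number of square-free proper divisors greater than $1$, then counting these via the $2^{\omega(n)}$ square-free divisors and distinguishing whether $n$ itself is among them. Your presentation is in fact more streamlined than the paper's, which works through several special cases (primes, prime powers, products of distinct primes) and expresses the count via $\sum_{1\le i<j}\binom{j}{i}$ before arriving at the same parity conclusion.
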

Let us indicate the idea of the proof. The nim-value $\SG(4)=0$ because the only non-unit proper divisor, $2$, is square-free, and $\SG(8)=0$, because there is exactly one square-free proper divisor, namely 2. In the proof we will use the idea that $\SG(n) = 0$ if and only if $n$ has
an even number of square-free proper divisors. 
\begin{proof}
We induct on the number of divisors. 

If $n=p$ is prime, there is an even number, namely 0, of square-free non-unit proper divisors. The nim-value $\SG(p)=1$ is correct, because the move to the heap of size one is terminal. 

Consider an arbitrary number $n$. Each move will alter the nim-value modulo 2. We must relate this to the non-unit square-free proper divisors in the components of the option of $n$. By induction, if this number is even if and only if $\SG(n)=0$, we are done. Henceforth, we will ignore the component of a heap of size one, since it has nim-value $0$ and will not contribute to the disjunctive sum.

Suppose first that $n=p^2$ is a perfect square. Then the option is the prime $p$, and hence $\SG(p^2)=0$. Indeed, there is an odd number of square-free non-unit divisors.

If $n=p^t$, $t>2$, is any other power of a prime, we must prove that $\SG(n)=0$. The set of non-unit proper divisors is $\{p,\ldots , p^{t-1}\}$, and hence there is exactly one square-free divisor in the disjunctive sum  $p+\cdots + p^{t-1}$. By induction, we get that each component, except $p$ has nim-value $0$. This proves this claim.

Next, suppose $n = pq$, where $p$ and $q$ are primes. Then the option is $p + q$ of nim-value $1\oplus 1 = 0$. Hence $\SG(pq)=1$, and $n$ has an even number of square-free non-unit proper divisors.

Similarly, if $n=p_1\cdots p_j$ is a product of distinct primes, then $\SG(n)=1$. This follows, because the number of proper non-unit divisors, 
\begin{align}\label{eq:numberdivisors}
\sum_{1\le i<j} {j\choose i},
\end{align}
 is even, where $j$ is the number of prime factors in $n$ (this holds both for even and odd $n$). And, by induction, each such individual component divisor has nim-value 1. Note that, by moving in one such divisor, the number of components in the disjunctive sum changes parity; if moved in a prime, then the prime is deleted, if moved in $pq$, then this component splits to $p+q$, and so on. 

By combining these observations, we prove the general case of an arbitrary prime factorization. Assume $n$ contains a square. We must show that $\SG(n)=0$. By induction, we are concerned only with the square-free divisor components, and we show that the number of such divisors is odd. 

Indeed, if we assume $j$ in \eqref{eq:numberdivisors} is the number of distinct prime factors, then there is one missing term, namely ${j \choose j}$. Namely, the divisor composed of all square-free factors must be counted, whenever $n$ contains a square. Apart from this, no new square-free divisor is introduced. Thus, the number of such components is odd, and since by induction they have nim-value $1$, the result $\SG(n)=0$ holds.
\end{proof}

We have investigated a few more of the fullset games, including those in the subclass `subtraction', but not yet found other examples with sufficient regularity to prove basic correspondence with number theory. Apart from {\sc fullset maliquot}, this class, for now, remains a mystery.
For example, for {\sc fullset totient}, the sequence starts
 $0, 1, 0, 1, 1, 0, 0, 0, 0, 1, 1, 1, 1, 0, 1, 1, 0, 0, 0$. The heap of size one has nim-value zero by definition, and the heap of size two has nim-value one, because one is relatively prime with two. $\SG(3)=0$, because the option is $1+2$ of nim-value $0\oplus 1=1$. The sequence of the indices of the ones is $2,4,5, 10, 11,12, 13, 15$, and so on. This sequence does not yet appear in OEIS.
 
\section{Powerset games}\label{sec:powerset}
We study six version of the powerset games on arithmetic functions, and  we begin by listing the first 20 nim-values for the respective ruleset. All start at a heap of size one, except item 2, which starts at the empty heap (defined as terminal).
\begin{enumerate}
\item {\sc powerset maliquot}: move-to an element in the powerset of the proper divisors. $$0, 1, 1, 2, 1, 2, 1, 4, 2, 2, 1, 4, 1, 2, 2, 8, 1, 4, 1.$$
\item {\sc powerset saliquot}: subtract an element in the powerset of the divisors. $$ 0, 1, 2, 1, 4, 1, 2, 1, 8, 1, 2, 1, 4, 1, 2, 1, 16, 1, 2, 1.$$
\item {\sc powerset maliquant}: move-to an element in the powerset of the non-divisors. $$0, 0, 1, 0, 2, 1, 4, 8, 16, 2, 32, 1, 64, 4, 128, 8, 256, 16, 512$$
\item {\sc powerset saliquant}: subtract an element in the powerset of the non-divisors. $$0, 0, 1, 1, 2, 1, 4, 4, 8, 2, 16, 8, 32, 32, 64, 64, 128, 8, 256, 64.$$
\item {\sc powerset totative}:  move-to an element in the powerset of the relatively prime numbers smaller than the heap. $$0, 1, 2, 1, 4, 1, 8, 1, 2, 1, 16, 1, 32, 1, 2, 1, 64, 1, 128.$$
\item {\sc powerset nontotative}: move-to an element in the powerset of the non-relatively prime numbers smaller than the heap. $$0, 0, 0, 1, 0, 2, 0, 4, 1, 8, 0, 16, 0, 32, 4, 64, 0, 128, 0.$$
\end{enumerate}

For single heaps, these games tend to have nim-values powers of two. The intuition of this is that by induction there is plenty opportunity, in a powerset, to construct any number between the powers of two, by using various sums of single heaps. We will study the precise behavior in a couple of instances, namely items 2,3 and 5.

\begin{theorem}
Consider {\sc powerset saliquot}. Then $\SG(0)=0$, and $\SG(n)=2^p$, if $2^p$ is largest power of two divisor of $n\ge 1$.
\end{theorem}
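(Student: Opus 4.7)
The plan is to induct on $n$, treating $\SG(0)=0$ as the base case (no divisors available, hence the empty option set), and then verify the formula $\SG(n)=2^{v(n)}$ for $n\ge1$ via the mex rule. The key structural observation is that every option is obtained by choosing a nonempty subset $S=\{d_1,\ldots,d_k\}$ of divisors of $n$, whose nim-value equals $\bigoplus_{i}\SG(n-d_i)$; by induction this simplifies to an XOR of the powers of two $2^{v(n-d_i)}$ (with the convention that the term for $d_i=n$ contributes $0$).

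First I would prove that every value in $\{0,1,\ldots,2^{v(n)}-1\}$ is attained. For $0$, simply take $S=\{n\}$. For $k\ge1$, write $k=2^{a_1}+\cdots+2^{a_r}$ in binary with $a_j<v(n)$; since $v(n)=p$, each $2^{a_j}$ is a divisor of $n$. Taking $S=\{2^{a_1},\ldots,2^{a_r}\}$, each summand $n-2^{a_j}$ has $v(n-2^{a_j})=a_j$ (because $a_j<p=v(n)$), and so by induction $\SG(n-2^{a_j})=2^{a_j}$. These powers of two are distinct, so the nim-sum equals $k$.

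Next I would show $2^p$ is unreachable. For any divisor $d$ of $n$ with $d\ne n$, either $v(d)<p$, in which case $v(n-d)=v(d)<p$, so $\SG(n-d)=2^{v(d)}<2^p$; or $v(d)=p$, in which case writing $n=2^pa$ and $d=2^pb$ with $a,b$ odd and $b\mid a$, $b<a$, we get $n-d=2^p(a-b)$ with $a-b$ even, hence $v(n-d)\ge p+1$ and $\SG(n-d)\ge 2^{p+1}$. Together with the contribution $\SG(0)=0$ from the choice $d=n$, every term in the XOR $\bigoplus_i\SG(n-d_i)$ has its (single) $1$-bit at a position strictly different from $p$. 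Therefore no XOR of such terms can have a $1$ at bit position $p$, so the option value cannot equal $2^p$. Combined with the first step, the mex equals $2^p$, completing the induction.

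There is no serious obstacle here; the argument is largely a bookkeeping exercise in $2$-adic valuations. The only delicate point is making sure the $v(d)=p$, $d<n$ case yields strictly $v(n-d)>p$ (used so that these contributions land above bit $p$ rather than at bit $p$), which follows because both $a$ and $b$ are forced to be odd. Once that is noted, the separation of option nim-values into the ``below $p$'', ``equal to $0$'', and ``above $p$'' buckets makes both the achievability of $\{0,1,\ldots,2^p-1\}$ and the avoidance of $2^p$ immediate from elementary properties of XOR.
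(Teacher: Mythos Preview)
Your proof is correct and follows essentially the same approach as the paper's: induct on $n$, realize each target $k<2^{p}$ by choosing a subset of divisors whose individual subtractions hit the binary bits of $k$, and block $2^{p}$ by showing that $v(n-d)\neq p$ for every divisor $d$ (so no XOR of the resulting powers of two can set bit $p$). The only cosmetic difference is that the paper uses the divisors $2^{q}a$ (with $a$ the odd part of $n$) where you use $2^{a_j}$; both choices work, and your write-up is in fact more explicit than the paper's at the step where the subset/XOR is invoked.
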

\begin{proof}
A heap of size zero has nim-value 0 because it is terminal by definition. The heap of size one has nim-value $1=2^0$, because $1\mid 1$.  The heap of size two has nim-value $2=2^1$, because $1, 2\mid 2$, and $\SG(2-1)=1, \SG(2-2)=0$. Both these cases satisfy the largest power of two divisor criterion.

Suppose the statement holds for all numbers smaller than the heap size $n=2^pa$, with $a$ odd, say. We must show that all nim-values less than $2^p$ exist among the options of $n$. For each $q<p$, we will find a number $0 \le m< n$ with $2^q$ largest power of $2$ divisor of $m$, and where $n-m$ is a divisor of $n$. For example with $m=n-2^qa\in \N$, then $n-m=2^qa\mid n$, and $m=2^{q}a(2^{p-q}-1)$ has greatest power of two divisor $2^q$. By induction, $\SG(m)=2^q$. Let $q$ range between $0$ and $p-1$. By the rules of {\sc powerset}, and by using the disjunctive sum operator, this suffices to establish that all nim-values less than $2^p$ exist among the options of $n$.

Next, we must prove that the nim-value $2^p$ does not exist among the options. It suffices to show that no individual heap in an option, which is a disjunctive sum, is of the same form as $n$. This follows, since, by induction, all numbers smaller than $n$ have nim-values powers of two, and apply nim-sum. 

A divisor of $n$ is of the form $2^qy$, where $y\mid a$ is odd, and where $q\le p$. 

Suppose first $q=p$. Then $n-2^py=2^py(a/y-1)$. But $a/y$ is odd, and hence $a/y - 1$ is even, so $n - 2^py = 2^zb$, with $z>p$ and $b$ odd, unless $a=y$ when $n-2^py=0$. 

In case $q < p$, we get $n-2^qy=2^qy(2^{-q}n/y-1)$, and since $2^{-q}n/y-1$ is odd, by induction, the heap is not of the same form (since $q<p$). 
\end{proof}
Recall the indexing function, $i_o$, of largest odd divisor, concerning the singleton version of {\sc maliquant}. It applies here as well with some initial  modification; while it looks like one could `peel' off the $2$'s it does not work due to the irregular set of initial nim-values.
\begin{theorem}
Consider {\sc powerset maliquant}. The sequence starts at a heap of size one, and the first eight nim-values are, $0,0,1,0,2,1,4,8$. Otherwise, if $n=2k+1, k\ge 4$, then $\SG(n)=2^k$, and if $n\ge 10$ is even, then $\SG(n)=\SG(n/2)$. 

\end{theorem}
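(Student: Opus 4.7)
The plan is to argue by strong induction on $n$, with the base cases $1\le n\le 8$ verified by direct computation against the stated initial values $0,0,1,0,2,1,4,8$. The central structural observation is that by the inductive hypothesis, every nim-value of a heap strictly less than $n$ is either $0$ or a power of two. Writing $N_n=\{d<n:d\nmid n\}$ for the non-divisor set and $B(n)=\{j\ge 0:\exists\,h\in N_n,\ \SG(h)=2^j\}$ for the active-bit set, the fact that options of $n$ are non-empty disjunctive sums of heaps from $N_n$, combined with the all-powers-of-two structure, implies that the achievable nim-sums form the $\mathbb{F}_2$-span of $\{2^j:j\in B(n)\}$, provided $0$ is itself realized by some non-empty subset (immediate from a repeated nim-value in $N_n$, or from the heap $4$ when $4\in N_n$). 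Hence $\SG(n)=2^{j^*}$ with $j^*=\min(\mathbb{N}_0\setminus B(n))$, reducing both sub-statements to locating the smallest missing bit.

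For odd $n=2k+1\ge 9$, I would show $B(n)=\{0,1,\ldots,k-1\}$. Bit $3$ is supplied by the even non-divisor $8$; bits $0,1,2$ by the even non-divisors $6,10,14$ when these are less than $n$, and by the odd heaps $3,5,7$ in the remaining small cases $n\in\{9,11,13\}$ (checked directly). For $4\le j\le k-1$, if $4j+2\le 2k$, use the even non-divisor $4j+2$ with $\SG(4j+2)=\SG(2j+1)=2^j$ by the inductive recursion and odd formula; otherwise $j>(k-1)/2$, so $2j+1>k\ge(2k+1)/3$, and since the largest proper divisor of $n$ is at most $n/3$, the odd heap $2j+1$ is a non-divisor with $\SG(2j+1)=2^j$ by induction. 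Non-membership of $k$ in $B(n)$ follows because every $h<n$ has odd part at most $2k-1$, forcing $\SG(h)\le 2^{k-1}$.

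For even $n=2m\ge 10$, let $\SG(m)=2^{j^*}$; the goal is $\min(\mathbb{N}_0\setminus B(n))=j^*$. The inclusion $B(m)\subseteq B(n)$ is handled by the doubling map $h'\mapsto 2h'$: if $h'\in N_m$ has $\SG(h')=2^j$ then $h'\notin\{1,2,4\}$; the heap $h=2h'$ lies in $N_n$ since $h<2m$ and $h\nmid 2m\iff h'\nmid m$, and $\SG(2h')=\SG(h')=2^j$ by the inductive recursion for $h'\ge 5$ (with $h'=3$ handled by $\SG(6)=\SG(3)=1$ from the base). The crux is the non-membership $j^*\notin B(n)$, which I would extract from the level-set description implied by the induction: the set $S_{j^*}=\{h:\SG(h)=2^{j^*}\}$ has the uniform shape $\{q\cdot 2^a\}$ for a single odd $q$, namely $q=2j^*+3$ for $j^*\in\{0,1,2\}$, $q=1$ (with $a\ge 3$) for $j^*=3$, and $q=2j^*+1$ for $j^*\ge 4$. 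Since $m\in S_{j^*}$ we have $m=q\cdot 2^b$, hence $2m=q\cdot 2^{b+1}$; any $h\in S_{j^*}$ with $h<2m$ is then $q\cdot 2^a$ with $a\le b$, which divides $2m$ and thus lies outside $N_n$. This single-odd-part classification of level sets is the main obstacle; once it is available, combining it with $B(m)\subseteq B(n)$ yields $\SG(n)=2^{j^*}=\SG(m)$.
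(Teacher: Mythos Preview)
Your approach is correct and is more structured than the paper's own argument. Both proofs rest on the same inductive fact---every smaller heap has nim-value $0$ or a power of two---but they exploit it differently. The paper works with the interval of non-divisors $(n/2,n-1]$ and, for each target bit $y$, doubles the odd number $2y+1$ until it lands in that interval; the exclusion of the critical bit is argued only in the case $n=4t+2$ (with $n/2$ odd), and the reader must extrapolate to general even $n$. You instead reduce everything to the bit-set $B(n)$ and, for the even case, use the level-set description $S_{j}=\{q_j\cdot 2^a\}$ with a single odd $q_j$. Your doubling map $h'\mapsto 2h'$ for the inclusion $B(m)\subseteq B(n)$, together with the observation that every $h\in S_{j^*}$ below $n=q_{j^*}2^{b+1}$ must divide $n$, handles all even $n\ge 10$ uniformly and makes the exclusion $j^*\notin B(n)$ transparent; this is cleaner than what the paper writes down.

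One point deserves tightening. Your parenthetical ``immediate from a repeated nim-value in $N_n$, or from the heap $4$ when $4\in N_n$'' is not yet a proof that $0$ is always realized by a non-empty subset. For odd $n$ this is genuinely immediate since $2\in N_n$ and $\SG(2)=0$. For even $n=2m\ge 10$, the quickest uniform fix is to take the largest odd $q<m$; then $q\in\{m-1,m-2\}$ forces $q\nmid m$ for $m\ge 5$, hence $q,2q\in N_n$, and $\SG(2q)=\SG(q)$ (by the base value $\SG(6)=\SG(3)=1$ when $q=3$, and by the even recursion when $q\ge 5$), giving a two-element subset with nim-sum $0$. With that filled in, your reduction $\SG(n)=2^{\min(\mathbb{N}_0\setminus B(n))}$ is fully justified, and the remainder of your argument goes through as written.
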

\begin{proof}
The smaller heaps are easy to justify by hand. 
The heap of size 8 is pivotal. It achieves nim-value 0, by the option $3+6$, both numbers being nondivisors. And the nim-values $1,2,4$ may be combined freely by using the nondivisor heaps $5,6,7$. Hence, the nim-values of the small heaps are verified. 

For the base cases, we consider the heaps of sizes $9$ and $10$, of nim-values $16=2^4$, with $2\cdot 4+1=9$ and $2=\SG(5)$ respectively.  

For the induction, let us start with a heap of even size, $n=4t+2$, say. It suffices to show that $\SG(n)=\SG(n/2)$. Observe that each number between $n/2$ and $n$ is a nondivisor to $n$, and hence may be part of a disjunctive sum to build desirable nim-values, by induction. Since $n/2 = 2t+1$ is odd, each power of two $2^4, \ldots , 2^t$ appears among the nim-values for heap sizes in $[9, n/2]$. By induction, each power of two $2^0,\ldots , 2^{t-1}$ appears as a nim-value in the heap interval $I=[n/2-1,\ldots n-1]$. Namely, for $y\in [0,t-1]$, multiply $2y+1$ by $2$ iteratively until $2^s(2y+1)\in I$.  Thus all numbers smaller than $2^{t}$ appears as options, but note that the nim-value $2^t$ appears only as a nim-value for a divisor of $n$, and hence this is the minimal exclusive. This proves that $\SG(n)=2^t$, if $n$ is even, as desired.  

Now, consider odd $n=2t+1$, say. By induction the nim-value of each heap smaller than $n$ is less than $2^t$. In case of $t$ even, the powers of two, $2^{t/2},\ldots , 2^{t-1}$ appear for nim-values of odd heaps in the interval $[t,\ldots , 2t-1]$. And similar to the case for even $n$, the smaller power of two nim-values can also be found in this interval. Therefore each nim-value smaller than $2^t$ appear as an option of a disjunctive sum of non-divisors of $n$. Hence, the minimal excusive is $\SG(n)=2^t$.
\end{proof}
Recall the function $i_p$, the index of the smallest prime divisor of $n$, where the prime 2 has index 1,  for the solution of {\sc totative}, from Section~\ref{sec:totative}. It applies for the powerset game as well.
\begin{theorem}\label{thm:powtotative}
Consider {\sc powerset totative}. Then $\SG(n)=2^{i-1}$, where $i=i_p$.
\end{theorem}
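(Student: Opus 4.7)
The plan is to induct on $n$, exploiting the fact that by induction every smaller heap has $\SG$-value equal to a power of $2$ (or $0$, in the single case $k=1$). This rigid structure will make the XOR computation across subsets very clean: at each bit position, the parity counts how many chosen summands carry exactly that power of $2$.

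Write $R(n) = \{k : 1\le k<n,\ (k,n)=1\}$ and let $i = i_p(n)$, so that $p_i$ is the smallest prime divisor of $n$. Base cases $n=1,2$ are immediate. For the inductive step I would first record the central structural observation: by induction, $\SG(k)\in\{0\}\cup\{2^{j-1} : j\ge 1\}$ for every $k<n$, and crucially, \emph{no} $k\in R(n)$ can satisfy $\SG(k)=2^{i-1}$. The latter holds because $\SG(k)=2^{i-1}$ with $k\ge 2$ would force $i_p(k)=i$, hence $p_i\mid k$, contradicting $\gcd(k,n)=1$.

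For the lower bound $\SG(n)\ge 2^{i-1}$ I would show that $2^{i-1}$ never appears as the nim-value of an option. Any option is a disjunctive sum over some non-empty $S\subseteq R(n)$, with nim-value $\bigoplus_{k\in S}\SG(k)$. Since each $\SG(k)$ is either $0$ or a single power of $2$, bit $i-1$ of this XOR equals the parity of $\#\{k\in S : \SG(k)=2^{i-1}\}$. By the structural observation this count is $0$, so bit $i-1$ of every option is $0$, and in particular no option equals $2^{i-1}$. For the upper bound $\SG(n)\le 2^{i-1}$ I would exhibit explicit options realizing every value $v\in\{0,1,\dots,2^{i-1}-1\}$. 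Here I use that the primes $p_1<p_2<\dots<p_{i-1}$ are all strictly smaller than $p_i\le n$ and coprime to $n$, hence lie in $R(n)$; by induction $\SG(p_{j})=2^{j-1}$. Given $v=\sum_{j\in J}2^{j}$ with $J\subseteq\{0,\dots,i-2\}$, the subset $\{p_{j+1}:j\in J\}$ produces nim-value $v$ (and $v=0$ is attained by the single heap $\{1\}$).

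Combining the two bounds yields $\mathrm{mex}=2^{i-1}$, so $\SG(n)=2^{i-1}$. The main conceptual obstacle is the interplay between the \textsc{powerset} operator and the inductive hypothesis: a priori the disjunctive sums allow huge XOR combinations, and one has to notice that because the inductive hypothesis forces each summand to contribute a \emph{single} binary bit, the powerset essentially behaves like a vector-space span over $\mathbb{F}_2$. Once that is in hand, the only delicate point is verifying the ``forbidden bit'' at position $i-1$, which is precisely what the coprimality condition in $R(n)$ guarantees.
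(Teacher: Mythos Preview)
Your argument is correct and follows essentially the same route as the paper's proof: induct, use the primes $p_1,\dots,p_{i-1}\in R(n)$ to build every value below $2^{i-1}$ as an option, and use the coprimality obstruction to rule out any option of value $2^{i-1}$ (since any XOR equal to $2^{i-1}$ would require a summand with that bit set). One cosmetic slip: you have the labels ``lower bound'' and ``upper bound'' interchanged---exhibiting all values $0,\dots,2^{i-1}-1$ as options gives $\SG(n)\ge 2^{i-1}$, while showing $2^{i-1}$ is never an option then pins down $\SG(n)=2^{i-1}$; the mathematics is unaffected.
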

\begin{proof}
The nim-value of a heap of size one is 0, since it is terminal. A heap of size two has a move to the heap of size one, because 1 is relatively prime with all numbers greater than 1. Hence $\SG(2)=1=2^0$. Suppose the statement holds for all numbers smaller than $n>1$. 

If $n$ is even, we must prove that there is a move to nim-value 0, but no move to nim-value 1. The first part is done in the first paragraph. Hence, let us show, by induction, that there is no move to nim-value 1. Since all smaller heaps of odd size have even nim-values, then a disjunctive sum of nim-value 1 must contain a heap of even size. This is impossible, since heaps of even size are not relatively prime with $n$. 

Suppose that $n$ is odd, so that the index of the smallest prime divisor of $n$ is $i>1$. We must show that $\SG(n)=2^{i-1}$. By induction, each smaller prime divisor, with index $q<i$ say, has appeared in a heap size smaller than $n$,  with nim-value $2^{q-1}$. Since any disjunctive sum of heap sizes relatively prime with $n$ is permitted as an option, by induction, each nim-value smaller than $2^{i-1}$ can be obtained. 

Next, we show that there is no option of nim-value $2^{i-1}$. This generalizes the idea used in the second paragraph. A disjunctive sum of nim-value $2^{i-1}$ must contain a component of nim-value $2^{i-1}$. But, by induction, those heap sizes are not relatively prime with $n$.
\end{proof}

\section{Discussion--future work}\label{sec:disc}
A natural generalization of counting the number of elements satisfying an arithmetic function is to instead consider their sum, or partial sums. For example, consider the sum generalization of the {\sc mtau}, that is, the option of $n$ is the sum of the proper divisors of $n$. 
For example $4$ has the proper divisors $1$ and $2$ and therefore the option is $3$. Loops and cycles occur for perfect numbers (those where the sum of proper divisors equals the number) and (temporarily) increased heap sizes for abundant numbers
(those where the sum of proper divisors is greater than the number).  The first loop appears at $1+2+3 = 6$ (where `+' is arithmetic sum). 

This might at first sight seem to disqualify the Sprague-Grundy function,\footnote{Fraenkel et al have developed a generalized Sprague-Grundy function for cyclic short games.} but in fact, since the game is binary, the cycles are trivial, in the following sense. If we play a disjunctive sum of games where one component will not end, then the full game will not end. And reversely, if no component contains a cycle, but perhaps temporarily increasing heap sizes, then the full game will end and a winner may be declared. The nim-value sequence of this ruleset begins at a heap of size one, as follows: $0, 1, 1, 0,1,\infty, 1, 0, 1, 1, 1, ?$, where the infinity at heap size 6 indicates the loop, $1+2+3=6$. 

Let us compute the nim-value for $n = 12$, which is indicated by `?' in the sequence above. The sum of proper divisors is $16$ 
(temporary increase), followed by options $15$ and $9$, in the next two moves. The sequence above indicates that $\SG(9) = 1$, and therefore, $\SG(12) = 0$. The recurrence where a number is mapped to the sum of its proper divisors has been studied in number theory literature, without the games' twist. It seems well worthy some more attention.

Even more interesting is the same ruleset but where the player may pick any partial sum of proper divisors. We have the following table, where for example the options of a heap of size $4$ are $1,2$ and $1+2$.
\vskip 8pt
\begin{tabular}{ccc}
$n$ & $\mbox{opt}(n)$ & $\SG(n)$\\\hline
1& $\varnothing$ & 0\\
2& $1$ & $1$\\
3& $1$ & $1$\\
4& $1,2,3$ & $2$\\
5& $1$ & $1$\\
6& $1,2,3, 4,5,6 $ & $\infty_3$\\
7& $1$ & $1$\\
8& $1,2,3,4,5,6,7$ & $\infty_3$\\
9& $1,3,4$ & $3$
\end{tabular}\vskip 8pt\noindent

Here $\infty_3$, means the nim-value 3, but with an additional option an infinity, namely $\infty_3$. Consider for example the disjunctive sum of heaps $6+9$. Then every move apart from playing to $\infty_3$ is losing. So, this game is a draw. However, playing instead $6+7$, the first player wins by moving to $2+7$, $3+7$ or $5+7$. That is, a loopy game component is sensitive to the disjunctive sum. The $\omega$ game also seems to have an interesting sum variation, but now we are ready to go and prepare some lunch.\\

\noindent{\bf Acknowledgement.} This work started when the second author visited the first author at the University of the Virgin Islands in April 2015. It breaks my heart to acknowledge that the first author passed away 15 October 2020. Doug is deeply missed. Many thanks to the referee, whose comments helped to improve the readability of this paper.


\begin{thebibliography}{9}

\bibitem{WW} E.\ R.\ Berlekamp, J.\ H.\ Conway, R.\ K.\ Guy. \textit{Winning Ways}, Academic Press, London, 1982.

\bibitem{Bouton}
C. Bouton, ``Nim, a game with a complete mathematical theory,'' \emph{Annals of Math.}, 2nd ser., {\bf3}, no. 1/4, 35--39 (1901--2).

\bibitem{DDLP} A. Dailly, E. Duchene, U. Larsson, G. Paris, Partition games, \emph{Discrete Applied Mathematics}, 285, (2020) 509--525.

\bibitem{Grundy}
P. Grundy, ``Mathematics and games,'' \emph{Eureka} {\bf2}, 6--8 (1939).

\bibitem{HW}
G. H. Hardy and E. M. Wright, \emph{An Introduction to the Theory of Numbers}  (5th ed.). Oxford: Clarendon Press (1979) [1938].

\bibitem{HS} H. Shapiro, An Arithmetic Function Arising from the $\Phi$ Function, \emph{The American Mathematical Monthly}, 50:1, 18-30 (1943).

\bibitem{Sloane} N. Sloane, \emph{The On-Line Encyclopedia of Integer Sequences (OEIS)}, website at {\tt{http://oeis.org/}}.

\bibitem{Sprague}
R. Sprague, ``\"{U}ber mathematische Kampfspiele,'' \emph{T\^{o}hoku J. Math.}, {\bf 41}, 438--444 (1936).

\end{thebibliography}
\end{document}